\newtheorem*{theorem-non}{Theorem}
\newtheorem*{lemma-non}{Lemma}
\newtheorem{theorem}{Theorem}[section]
\newtheorem{lemma}[theorem]{Lemma}
\newtheorem{proposition}[theorem]{Proposition}
\newtheorem{example}[theorem]{Examples}
\theoremstyle{definition}
\newtheorem{defi}[theorem]{Definition}
\newtheorem{remark}[theorem]{Remark}
\newtheorem{theorem*}[theorem]{Theorem}
\def\O{\Omega}
\def\o{\omega}
\def\C{\mathcal{C}}
\def\d{{\rm d}}
\def\dist{{\rm dist}} 
\def\diam{{\rm diam}}
\def\length{{\rm length}}
\def\supp{{\rm supp}}
\def\dx{{\rm d}x}
\def\dy{{\rm d}y}
\def\dz{{\rm d}z}
\def\W{\mathcal{W}}
\def\S{\mathcal{S}}
\newcommand{\R}{{\mathbb R}}
\newcommand{\N}{{\mathbb N}}
\DeclareMathOperator*{\esssup}{ess\,sup}
\DeclareMathOperator*{\essinf}{ess\,inf}
\begin{document}
\date{\today}
\section*{}
\title[]{On the weighted fractional Poincar\'e-type inequalities}

\author[R. Hurri-Syrjanen]{Ritva Hurri-Syrj\"anen}
\address{Department of Mathematics and Statistics, FI-00014 University of Helsinki, Finland}
\email{ritva.hurri-syrjanen@helsinki.fi}

\author[F. L\'opez-Garc\'\i a]{Fernando L\'opez-Garc\'\i a}
\address{Department of Mathematics and Statistics\\ California State Polytechnic University Pomona, 
3801 West Temple Avenue, Pomona, CA (91768), US} 
\email{fal@cpp.edu}

\keywords{Fractional Poincar\'e inequalities, 
Hardy-type operator, 
Tree covering,
Weights}

\subjclass[2010]{Primary: 46E35  ; Secondary: 26D10}

\begin{abstract} 
Weighted fractional Poincar\'e-type  inequalities are proved on John domains
whenever the weights  defined on the domain  depend on the distance to 
the boundary and to  an arbitrary compact set in the boundary
of the domain.

\end{abstract}

\maketitle

\section{Introduction}
\label{Intro}
\setcounter{equation}{0}

In this article 
we study a version of the  classical fractional Poincar\'e-type inequality
where the domain in the double integral in the Gagliardo seminorm is replaced by a smaller one:
\begin{equation}\label{Ritva-Antti}
\left(\int_{\Omega}|u(x)-{ u_{\Omega}}|^p \dx\right)^{1/p}\leq C
\left(\int_\Omega\int_{B(x,\tau d(x))}\frac{|u(x)-u(y)|^p}{|x-y|^{n+sp}}\dy\dx\right)^{1/p}.
\end{equation}
The parameter $\tau$ in the double integral belongs to $(0,1)$
and $d(x)$ denotes the distance from $x$ to $\partial \Omega$. 
The inequality \eqref{Ritva-Antti} was introduced in
\cite{HV}.
It is well-known that the fractional classical Poincar\'e inequality is valid for any bounded domain, while this new version
\eqref{Ritva-Antti}
depends on the geometry of the domain. In \cite{HV} it was proved that the inequality 
\eqref{Ritva-Antti}
is valid on John domains
and, hence, in particular on Lipschitz domains. An example  of a domain where the inequality
\eqref{Ritva-Antti} is not valid
was  also given.
We refer the reader to \cite{HV1} and \cite{DIV}  where the fractional Sobolev-Poincar\'e versions of \eqref{Ritva-Antti} are considered.
For a weighted version of \eqref{Ritva-Antti} where weights are power functions to the boundary we refer to \cite{DD}. 

The main result of our paper is the following theorem where the distance to an arbitrary set of the boundary has been added as a weight.

\begin{theorem}\label{Main} 
Let $\O$ in $\R^n$ be a bounded John domain and $1< p<\infty$. Given a compact set $F$ in $\partial \Omega$,  and the parameters $\beta\geq 0$ and  $s,\tau\in (0,1)$, there exists a constant $C$ such that 
\begin{multline}\label{Fractional}
\left(\int_{\Omega}|u(x)-{ u_{\Omega ,\omega }}|^p d_F^{p\beta}(x)\dx\right)^{1/p}
\\
\leq C
\left(\int_\Omega\int_{B(x,\tau d(x))}\frac{|u(x)-u(y)|^p}{|x-y|^{n+sp}}  d^{ps}(x)d_F^{p\beta}(x)\dy\dx\right)^{1/p}
\end{multline}
for all functions $u\in L^p(\O,d(x)^{p\beta})$, where $d(x)$ and $d_F(x)$ denote the distance from $x$ to $\partial\O$ and $F$ respectively, and $u_{\Omega ,\omega}$ is the weighted average $\frac{1}{d_F^{p\beta}(\O)}\int_\O u(z) d_F^{p\beta}(z)\dz$.

In addition, the constant $C$ in (\ref{Fractional}) can be written as 
\begin{align*}
C= C_{n, p, \beta}\, \tau^{s-n} K^{n+\beta},
\end{align*} 
where $K$ is the geometric constant introduced in (\ref{Boman tree}).
\end{theorem}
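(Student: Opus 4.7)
The plan is to combine a Boman chain decomposition of the John domain $\Omega$ with a local fractional Poincar\'e estimate and a weighted discrete Hardy-type inequality on the indexing tree. The John hypothesis provides a family $\{\Omega_t\}_{t\in\Gamma}$ of subsets indexed by a rooted tree $\Gamma$ with root $t_0$, such that each $\Omega_t$ is comparable to a ball of radius proportional to $d(x_t)$ centered at some $x_t\in\Omega_t$, the family has bounded overlap, and the constant $K$ from (\ref{Boman tree}) simultaneously controls the overlap multiplicity, the ratio of radii of adjacent pieces, and the chain structure of $\Gamma$. For $x\in\Omega_t$ I would use the telescoping identity
\begin{equation*}
u(x)-u_{\Omega_{t_0}}=\big(u(x)-u_{\Omega_t}\big)+\sum_{(s,s')}\big(u_{\Omega_s}-u_{\Omega_{s'}}\big),
\end{equation*}
where the sum runs over consecutive pairs along the unique path from $t$ to $t_0$; the replacement of $u_{\Omega_{t_0}}$ by $u_{\Omega,\omega}$ on the left-hand side of \eqref{Fractional} is a constant that is absorbed by the triangle inequality at the end.

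For the local piece I would use that the weight $d_F^{p\beta}$ oscillates by at most a factor of order $K^\beta$ on each $\Omega_t$, because $\diam\Omega_t$ is comparable to $d(x_t)\leq d_F(x_t)$ and $d_F$ is $1$-Lipschitz, so it can be frozen to a constant at the cost of that factor; similarly $d(x)^{ps}$ is comparable to $d(x_t)^{ps}$ on $\Omega_t$. The local contribution thus reduces to the unweighted version of \eqref{Ritva-Antti} applied to a ball, which after summation over $t$ (with bounded overlap responsible for the factor $K^n$) produces the desired right-hand side of \eqref{Fractional}.

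For the chain piece I would bound each difference $|u_{\Omega_s}-u_{\Omega_{s'}}|$ by a local fractional Poincar\'e estimate on the overlap $\Omega_s\cap\Omega_{s'}$, thereby transforming the sum over chains into a discrete Hardy-type operator acting on values indexed by the edges of $\Gamma$. The key step is $\ell^p$-boundedness of this operator with weight sequence $|\Omega_t|\,d_F^{p\beta}(x_t)$. Since $d_F(x_{s'})$ and $d_F(x_s)$ differ by a factor of order at most $K$ between adjacent nodes, the weight grows at most geometrically along the tree, which together with the volume decay $|\Omega_t|$ down chains yields a Muckenhoupt-type condition for the pair of weights on the tree and hence an operator norm of order $K^\beta$.

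The main obstacle is precisely this weighted tree Hardy estimate for an \emph{arbitrary} compact set $F\subset\partial\Omega$: when $F$ is far from most of $\Omega$ the weight $d_F^{p\beta}$ is essentially constant and the inequality is trivial, but when $F$ sits near the ``bad'' parts of $\partial\Omega$ along which the John chains become long, the weight can vary rapidly from generation to generation of the tree, and one cannot simply compare $d_F$ to $d$. I expect the argument to proceed by verifying a discrete Muckenhoupt-type condition for $d_F^{p\beta}$ on the Boman tree using only the $1$-Lipschitz property of $d_F$ and the geometric control encoded in $K$. Combining this with the local estimate gives the final constant of order $K^{n+\beta}$, with $K^n$ originating in the covering geometry and $K^\beta$ in the weight propagation along the tree.
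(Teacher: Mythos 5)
Your proposal is a direct chain argument (telescoping along the Boman tree, then a discrete Hardy inequality on the tree), whereas the paper runs a dual argument: it estimates $\|u-u_{\Omega,\omega}\|_{L^p(\Omega,\omega^p)}$ by pairing against test functions $g+\alpha\omega^p$, decomposes $g$ into a finite $\mathcal{C}$-orthogonal family $\{g_t\}$ subordinate to the expanded Whitney cubes via Theorem~\ref{Decomp}, and then controls $\sum_t\|g_t\|^q_{L^q(U_t,\omega^{-q})}$ through the weighted boundedness of the Hardy-type operator $T$ on $L^q(\Omega,d_F^{-q\beta})$ (Lemmas~\ref{weighted T} and~\ref{dist F}). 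Both strategies are classical for John domains and both ultimately hinge on the same Whitney tree geometry and on the Lipschitz/Harnack behavior of $d_F$ along shadows, so your plan is viable in principle. The paper's duality route has the advantage that the ``chain Hardy'' inequality becomes a single $L^q$ operator bound with a transparent constant.

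There is, however, a genuine gap you have not addressed: the dependence on $\tau$. Your local step asserts that the contribution on each piece ``reduces to the unweighted version of \eqref{Ritva-Antti} applied to a ball,'' and your chain step bounds $|u_{\Omega_s}-u_{\Omega_{s'}}|$ by a local fractional Poincar\'e on the overlap. But for $x$ in a Whitney cube $Q_t^*$ one only has $\tau d(x)\sim\tau\,\ell(Q_t)$, which for small $\tau$ is much smaller than $\diam(Q_t^*)$ or $\diam(Q_s^*\cap Q_{s'}^*)$. Thus $B(x,\tau d(x))$ does not cover the piece, and neither the local estimate nor the chain-link estimate follows directly from the ordinary Gagliardo seminorm on that piece. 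This is precisely why the paper inserts Proposition~\ref{on cubes}: each Whitney cube must be further refined into $\sim(\sqrt{n}/\tau)^n$ subcubes small enough that $\diam(U_t)\leq\tau d(x)$, and this refinement is where the factor $\tau^{-n}$ (hence $\tau^{s-n}$) in the constant originates. Without this step your argument cannot produce the $\tau$-restricted seminorm on the right-hand side of \eqref{Fractional}. Relatedly, you attribute the $K^n$ factor to bounded overlap of the Whitney pieces, but that overlap is purely dimensional; in fact $K^n$ enters through the shadow-to-cube volume ratio $|W_t|/|B_t|\leq 72^nK^n$ in \eqref{Eccentricity}, i.e.\ through the Hardy operator, not the covering multiplicity. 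Finally, the discrete Muckenhoupt-type condition for $d_F^{p\beta}$ on the tree is asserted but not proved; the paper supplies exactly this in Lemma~\ref{dist F}, $\sup_{W_t}d_F\leq 3K\sqrt{n}\,\inf_{B_t}d_F$, and you would need an analogous quantitative comparison to close the chain estimate with constant $K^\beta$.
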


We would like to emphasize two points in this result:
The first one is that no extra conditions are required for the compact set $F$ in $\partial \Omega$.
The second point is that the estimate shows how the constant depends on the given $\tau$ and a certain geometric condition of the domain.

Some of the essential auxiliary parts for the proofs for weighted inequalities are from 
\cite{L1} and \cite{L2} where  a useful decomposition technique was introduced  by the second author.
Our work  was stimulated by the papers of Augusto  C. Ponce,
\cite{P1},
\cite{P2}, \cite{P3}, where  more general fractional Poincar\'e inequalities  for functions defined  on Lipschitz domains were investigated. 

The paper is organized as follows: In Section \ref{Notation}, we introduce some definitions and preliminary results. In Section \ref{Inequalities}, we show how to use decompositions of functions to extend the validity of certain inequalities
on ``simple domains", such as cubes, to more complex ones. We are interested in extending the results from cubes to John domains. In Section \ref{Cubesl}, we apply the results obtained in the previous section to estimate the constant in the unweighted version of \eqref{Fractional} on cubes. Especially we are interested in how the constant depends on $\tau$. This result is auxiliary of our main theorem but it might be of independent interest. In Section \ref{John}, we show the validity of the weighted fractional Poincare inequality studied in this paper with the estimate of the constant and a generalization to the type of inequalities considered by Ponce.

\section{Notation and preliminary results}
\label{Notation}
\setcounter{equation}{0}

Throughout the paper $\Omega$ in $\R^n$ is a bounded domain with $n\geq 2$,  $1< p<\infty$, and $1<q<\infty$ with $\frac{1}{p}+\frac{1}{q}=1$, unless otherwise stated.
  Moreover, given $\eta:\Omega\to\R$ a weight (i.e.,  a positive measurable function) and $1\leq r\leq \infty$, we denote by $L^r(\Omega,\eta)$ the space of Lebesgue measurable functions $u:\Omega\to\R$
equipped with the norm
\[\|u\|_{L^r(\Omega,\eta)}:=\left(\int_\Omega|u(x)|^r\eta(x)\,\d x\right)^{1/r}\]
if $1\leq r<\infty$, and 
\[\|u\|_{L^\infty(\Omega,\eta)}:=\esssup_{x\in\Omega} |u(x)\eta(x)|.\]
Finally, given a set $A$ we denote by $\chi_A(x)$ its characteristic function.

\begin{defi}\label{Definition decomposition} Let $\mathcal{C}$ be the space of constant functions 
from $\R^n$ to $\R$
and  $\{U_t\}_{t\in\Gamma}$ a collection of open subsets of $\Omega$ that covers $\Omega$ except for a set of Lebesgue measure zero; $\Gamma$ is an index set.  It also satisfies the additional requirement that for each $t\in\Gamma$ the set $U_t$ intersects a finite number of $U_s$ with $s\in\Gamma$. This collection $\{U_t\}_{t\in\Gamma}$ is called an {\it open covering of $\Omega$}. Given $g\in L^1(\Omega)$ orthogonal to $\C$ (i.e., $\int g\, \varphi=0$ for all $\varphi\in\C$), we say that a collection of functions $\{g_t\}_{t\in\Gamma}$
  in $L^1(\Omega)$ is a $\C$-{\it orthogonal decomposition of $g$} subordinate to $\{U_t\}_{t\in\Gamma}$ if the following three properties are satisfied:
\begin{enumerate}
\item $g=\sum_{t\in \Gamma} g_t.$
\item $\supp (g_t)\subset U_t.$
\item $\int_{U_t} g_t\,\varphi=0$, for all $\varphi\in\C$ and $t\in\Gamma$.
\end{enumerate}
\end{defi}
We also refer to this collection of functions by a $\C$-{\it decomposition}.  We say that $\{g_t\}_{t\in\Gamma}$ is a {\it finite $\C$-decomposition} if $g_t\not\equiv 0$ only for a finite number of $t\in\Gamma$.

 Inequality (\ref{Fractional}), and similar Poincar\'e type inequalities, can be written in terms of a distance to the space of constant functions $\C$ by replacing its left hand side by 
\begin{equation*}
\inf_{\alpha\in \C}\left(\int_{\Omega}|u(x)-\alpha|^p d_F^{p\beta}(x)\dx\right)^{1/p}.
\end{equation*}
The technique used in this paper may also be considered when the distance to other vector spaces $\mathcal{V}$ are involved, in which case, a $\mathcal{V}$-orthogonal decomposition of functions is required. We direct the reader to \cite{L3} where a generalized version of the Korn inequality is studied by using decomposition of functions.

Let us denote by $G=(V,E)$ a graph with vertices $V$ and edges $E$. Graphs in this  paper   have neither multiple edges nor loops and the number of vertices in $V$ is at most countable. 

A {\it rooted tree} (or simply a tree) is a connected graph $G$ in which any two vertices are connected by exactly one simple path, and a {\it root} is simply a distinguished vertex $a\in V$. Moreover, if $G=(V,E)$ is a rooted tree with a root $a$, it is possible to define a {\it partial order} ``$\preceq$" in $V$ as follows: 
$s\preceq t$ if and only if the unique path connecting $t$ with the root $a$ passes through $s$. \label{level} The {\it height} or {\it level} of any $t\in V$ is the number of vertices in $\{s\in V\,:\,s\preceq t\text{ with }s\neq t\}$. {\it The parent} of a vertex $t\in V$ is the vertex $s$ satisfying that $s\preceq t$ and its height is one unit smaller than the height of $t$. We denote the parent of $t$ by $t_p$. 
It can be seen that each $t\in V$ different from the root has a unique parent, but several elements in $V$ could have the same parent. Note that two vertices are connected by an edge ({\it adjacent vertices}) if one is the parent of the other.

\begin{defi} \label{Decomp of Omega}
Let $\Omega$ be  in $\R^n$ be a bounded domain. We say that an open covering $\{U_t\}_{t\in\Gamma}$ is
a {\it tree covering} of $\Omega$ if it also satisfies the properties: 
\begin{enumerate}
\item $\chi_\Omega(x)\leq \sum_{t\in\Gamma}\chi_{U_t}(x)\leq N \chi_\Omega(x)$, for almost every $x\in\Omega$, where $N\geq 1$.
\item  $\Gamma$ is the set of vertices of a rooted tree $(\Gamma,E)$ with a
 root $a$.
\item There is a collection $\{B_t\}_{t\neq a}$ of pairwise disjoint open cubes with $B_t\subseteq U_t\cap U_{t_p}$.
\end{enumerate}
\end{defi}

\begin{defi}\label{T and W} 
Given a tree covering $\{U_t\}_{t\in\Gamma}$ of $\O$ we define the following {\it Hardy-type operator} $T$
on $L^1$-functions:
\begin{eqnarray}\label{Ttree}
Tg(x):=\sum_{a\neq t\in\Gamma}\dfrac{\chi_{t}(x)}{|W_t|}\int_{W_t}|g|,
\end{eqnarray} 
where
 \begin{equation}\label{W_t}
 \displaystyle{W_t:=\bigcup_{s\succeq t} U_s}\,,
 \end{equation}
and $\chi_t$ is the characteristic function of $B_t$ for all $t\neq a$. 
\end{defi}
We may refer to $W_t$ by {\it the shadow of }$U_t$. 

Note that the definition of $T$ is based on the a-priori choice of a tree covering $\{U_t\}_{t\in\Gamma}$ of $\O$. Thus, whenever $T$ is mentioned in this paper there is a tree covering $\{U_t\}_{t\in\Gamma}$ of $\O$ explicitly or implicitly associated to it.

The following fundamental result was proved in \cite[Theorem 4.4]{L2}, which shows the existence of a $\C-$decomposition of functions subordinate to a tree covering of the domain.

\begin{theorem}\label{Decomp} Let $\Omega$ in $\R^n$ be a bounded domain with a tree covering $\{U_t\}_{t\in \Gamma}$. Given $g\in L^1(\Omega)$ such that  $\int_\O g\varphi=0$, for all $\varphi\in\C$, and $\supp(g)\cap U_s\neq \emptyset$ for a finite number of $s\in\Gamma$, there exists a  $\C$-decompositions $\{g_t\}_{t\in\Gamma}$ of $g$ subordinate to $\{U_t\}_{t\in\Gamma}$  (refer to Definition \ref{Definition decomposition}).

Moreover, let $t\in\Gamma$. If $x\in B_s$ where $s=t$ or $s_p=t$ then 
\begin{eqnarray}\label{P02}
|g_t(x)|\leq |g(x)|+\tfrac{|W_s|}{|B_s|}Tg(x),
\end{eqnarray}
where $W_t$ denotes the shadow of $U_t$  defined in  \eqref{W_t}.  Otherwise
\begin{eqnarray}\label{P01}
|g_t(x)|\leq |g(x)|.
\end{eqnarray}
\end{theorem}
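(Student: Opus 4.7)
My plan is to realize the decomposition as a flow on the tree: each piece $g_t$ will be the restriction of $g$ to a partition cell $A_t\subseteq U_t$, corrected by bridge terms supported in the cubes $B_s$ that route the excess mass between neighbouring shadows.

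I would start by choosing any Borel partition $\{A_t\}_{t\in\Gamma}$ of $\Omega$ with $A_t\subseteq U_t$; this is possible because $\{U_t\}$ covers $\Omega$ and is locally finite (for instance, fix a total order on $\Gamma$ and set $A_t=U_t\setminus\bigcup_{s<t}A_s$). For each $t\in\Gamma$ I would then define the flow
\[
\alpha_t:=\sum_{s\succeq t}\int_{A_s} g ,
\]
which is a finite sum by the support hypothesis on $g$. Since $A_s\subseteq U_s\subseteq W_t$ whenever $s\succeq t$, one has $|\alpha_t|\leq\int_{W_t}|g|$ and $\alpha_a=\int_\Omega g=0$, and the definition yields the recursion $\alpha_t=\int_{A_t}g+\sum_{s:\,s_p=t}\alpha_s$.

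Next, for each $t\neq a$ I set the bridge $\phi_t:=\frac{\alpha_t}{|B_t|}\chi_{B_t}$, supported in $B_t\subseteq U_t\cap U_{t_p}$ and of integral $\alpha_t$, and declare
\[
g_t:=g\chi_{A_t}+\sum_{s:\,s_p=t}\phi_s-\phi_t,
\]
with the last term omitted for $t=a$. The identity $\sum_t g_t=g$ is a telescoping (each $\phi_t$ with $t\neq a$ appears once positively in $g_{t_p}$ and once negatively in $g_t$); $\supp g_t\subseteq U_t$ holds because every child bridge $B_s$ lies in $U_{s_p}=U_t$ by Definition \ref{Decomp of Omega}(3); and the recursion above forces $\int g_t=0$. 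This gives the three properties of a $\C$-decomposition.

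The pointwise bounds then fall out from the disjointness of $\{B_r\}_{r\neq a}$: at any $x\in\Omega$ at most one $\chi_{B_r}$ is nonzero, and that $\phi_r$ can appear in the formula for $g_t$ only when $r=t$ or $r_p=t$. In either case $x\in B_r$ forces $Tg(x)=\tfrac{1}{|W_r|}\int_{W_r}|g|$, hence
\[
|g_t(x)|\leq |g(x)|+\tfrac{|\alpha_r|}{|B_r|}\leq |g(x)|+\tfrac{|W_r|}{|B_r|}Tg(x),
\]
which is \eqref{P02}; if $x$ lies in no such $B_r$ then $g_t(x)=g(x)\chi_{A_t}(x)$, and \eqref{P01} is trivial. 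The genuinely delicate point, and the one I expect to be the main obstacle, is the choice of definition of $\alpha_t$: setting $\alpha_t:=\sum_{s\succeq t}\int_{A_s}g$ (rather than the naive $\int_{W_t}g$, which would require a matching compatibility between the partition $\{A_t\}$ and the shadows $W_t$) is what lets the flow accounting close cleanly while keeping $|\alpha_t|\leq\int_{W_t}|g|$ intact, so that no additional hypothesis on the partition beyond $A_t\subseteq U_t$ is needed.
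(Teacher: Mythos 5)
Your proof is correct and is essentially the construction used in \cite[Theorem 4.4]{L2} (the source cited for this statement): carve $\Omega$ into a disjoint family $\{A_t\}$ with $A_t\subseteq U_t$, route the accumulated mass of each shadow through the bridge cubes $B_t$, and read off the bounds \eqref{P02}--\eqref{P01} from the disjointness of $\{B_t\}_{t\neq a}$. Your closing remark is the right one to emphasize: defining the flow as $\alpha_t=\sum_{s\succeq t}\int_{A_s}g$ rather than $\int_{W_t}g$ is what makes the parent--child recursion $\alpha_t=\int_{A_t}g+\sum_{s_p=t}\alpha_s$ exact without imposing any compatibility between the partition and the (overlapping) shadows, while the crude estimate $|\alpha_t|\leq\int_{W_t}|g|$ still yields exactly the factor $\tfrac{|W_s|}{|B_s|}Tg(x)$ demanded by \eqref{P02}.

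Two minor points worth tightening if you write this out in full. First, the inductive definition $A_t=U_t\setminus\bigcup_{s<t}A_s$ needs $\Gamma$ to be enumerated as a sequence (which is fine, since $\Gamma$ is at most countable by assumption), not merely totally ordered; a well-ordering of arbitrary type would make the ``union of earlier cells'' ill-founded. Second, finiteness of the decomposition should be spelled out: $g\chi_{A_t}\not\equiv 0$ for only finitely many $t$ by hypothesis, and $\alpha_t\neq 0$ forces $t$ to be an ancestor of one of those finitely many indices, each of which has finitely many ancestors (its level is finite); hence only finitely many $\phi_t$ and $g_t$ are nonzero, which is exactly Remark \ref{finite decomp}.
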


 \begin{remark}\label{finite decomp}
The $\C$-decomposition stated in Theorem \ref{Decomp} is finite. This fact is not in the statement of \cite[Theorem 4.4]{L2} but it is easily deduced from its proof. 
\end{remark}

In the next lemma, the continuity of the operator $T$ is shown. We refer the reader to \cite[Lemma 3.1]{L1} for its proof.

\begin{lemma}\label{Ttreecont} The operator $T:L^q(\O)\to L^q(\O)$ defined in (\ref{Ttree})
  is continuous for any $1<q\leq \infty$. Moreover, its norm is bounded by 
\[\|T\|_{L^q\to L^q} \leq 2\left(\dfrac{qN}{q-1}\right)^{1/q}.\]
Here $N$ is the overlapping constant from Definition \ref{Decomp of Omega}.
\end{lemma}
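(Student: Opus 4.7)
The plan is to exploit the disjointness of the cubes $\{B_t\}_{t\neq a}$. For almost every $x \in \Omega$ at most one term of the defining sum of $Tg(x)$ is non-zero, so
\[ Tg(x) = \frac{1}{|W_{t(x)}|}\int_{W_{t(x)}}|g|, \]
where $t(x)$ is the unique $t\neq a$ with $x \in B_t$ (when such a $t$ exists). The case $q = \infty$ follows at once, since each such average is bounded by $\|g\|_{L^\infty(\Omega)}$.

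For $1 < q < \infty$, I would pass to the adjoint. A short Fubini computation shows that $T$ and the operator
\[ T^{\ast} f(y) := \sum_{\substack{t \neq a \\ y \in W_t}} \frac{1}{|W_t|}\int_{B_t} f\,\d x \]
are adjoints, and hence have the same operator norm on $L^q$ and $L^{q'}$ respectively. Fix $y \in \Omega$ and set $\Gamma_y := \{t \in \Gamma : t \neq a,\; y \in W_t\}$. Since $s \preceq t$ implies $W_t \subseteq W_s$, the set $\Gamma_y$ is closed under taking ancestors in $(\Gamma,E)$. Moreover, if $t$ is maximal in $\Gamma_y$, then $y \in U_t$: indeed $y \in W_t$ yields $y \in U_s$ for some $s \succeq t$, and maximality forces $s = t$. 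Property (1) of Definition \ref{Decomp of Omega} then limits the number of maximal elements of $\Gamma_y$ to at most $N$, so $\Gamma_y$ is a union of at most $N$ chains of ancestors.

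Now I would estimate $T^{\ast} f$ chain by chain. Along a single chain $t_1 \prec t_2 \prec \cdots \prec t_k$ (shallowest to deepest), the shadows are nested $W_{t_1} \supseteq \cdots \supseteq W_{t_k}$ and the cubes $B_{t_j}$ are pairwise disjoint subsets of $W_{t_1}$. The partial sum $\sum_{j} |W_{t_j}|^{-1} \int_{B_{t_j}} f$ is then a Hardy-type operator on a linearly ordered set of nested averages, bounded on $L^{q'}$ with a sharp constant of the form $q/(q-1)$. Combining the contributions of the $\le N$ chains covering $\Gamma_y$ in an $\ell^{q'}$ sense produces the overlap factor $N^{1/q}$, and undoing the duality yields $\|T\|_{L^q \to L^q} \le 2(qN/(q-1))^{1/q}$.

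The main obstacle lies in the chain-by-chain Hardy estimate. A naive Schur test with the constant weight fails because $\sum_{t \in \Gamma_y} |B_t|/|W_t|$ can be unbounded along a single chain (a nested dyadic covering of $(0,1)$ already provides a counterexample). Thus the proof has to exploit the telescoping structure of the nested shadows $W_{t_j}$ against the disjoint $B_{t_j}$ to recover the sharp Hardy constant $q/(q-1)$, while letting the overlap constant $N$ enter only with the power $1/q$.
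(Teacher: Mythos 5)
Your structural observations are correct and useful: the $B_t$ are pairwise disjoint, so at most one term of the defining sum is nonzero a.e., which gives the $L^\infty$ case at once; the formal adjoint is as you write; and $\Gamma_y$ is closed under ancestors, has at most $N$ maximal elements (each with $y\in U_t$), and is therefore a union of at most $N$ root-anchored chains. The proof, however, stalls exactly where you say it does: the chain-by-chain Hardy bound is the whole content of the lemma, and it is asserted rather than proved. You yourself note that the naive Schur test fails because $\sum_{t\in\Gamma_y}|B_t|/|W_t|$ can diverge along a single chain; invoking a ``telescoping structure'' without producing an inequality does not close this gap. Worse, the stated constant cannot come out of this route: the one-sided Hardy operator has sharp $L^{q'}$-norm $q'=q/(q-1)$ \emph{to the first power}, and the power-mean inequality over $\leq N$ chains contributes a factor $N^{q'-1}$ inside the $q'$-th power, so the best one could hope for from your scheme is roughly $2\,N^{1/q}\,q/(q-1)$, which is strictly larger than $2\bigl(qN/(q-1)\bigr)^{1/q}$ for every $q\in(1,\infty)$.

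The shape of the constant $2\bigl(qN/(q-1)\bigr)^{1/q}$ is the signature of Marcinkiewicz real interpolation, which is how the cited proof in \cite[Lemma 3.1]{L1} proceeds. One shows that $T$ is of weak type $(1,1)$ with constant $N$: with $a_t:=|W_t|^{-1}\int_{W_t}|g|$ and the $B_t$ disjoint, $|\{Tg>\lambda\}|=\sum_{a_t>\lambda}|B_t|$. Let $M_\lambda$ be the set of minimal indices $t\neq a$ (closest to the root) with $a_t>\lambda$; this is an antichain, every $t$ with $a_t>\lambda$ has a unique minimal ancestor $m(t)\in M_\lambda$, and for fixed $s\in M_\lambda$ the cubes $\{B_t:\,m(t)=s\}$ are disjoint and lie in $W_s$, so $\sum_{m(t)=s}|B_t|\leq|W_s|<\lambda^{-1}\int_{W_s}|g|$. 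Since $M_\lambda$ is an antichain inside $\Gamma_y$, which is a union of at most $N$ chains, one has $\sum_{s\in M_\lambda}\chi_{W_s}(y)\leq N$ a.e., hence $|\{Tg>\lambda\}|\leq N\lambda^{-1}\|g\|_1$. Interpolating against the trivial $L^\infty$ bound via the usual truncation of $g$ at height $\lambda/2$ gives exactly the claimed norm. Your antichain observation is the right combinatorics; it simply needs to enter at the weak-$(1,1)$ level rather than through a Hardy inequality on chains.
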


If $q=\infty$, the previous inequality means $\|T\|_{L^\infty\to L^\infty} \leq 2$. Actually, for being $T$ an averaging operator, it can be easily observed that $\|T\|_{L^\infty\to L^\infty} = 1$, but it does not affect our work. Notice that $L^q(\Omega, \omega^{-q})\subset L^1(\Omega)$ if the weight $\omega:\Omega\to\R_{>0}$ satisfies that $\omega^p\in L^1(\Omega)$. Then, the operator $T$ introduced in Definition \ref{T and W} for functions in $L^1(\Omega)$ is well-defined in $L^q(\Omega, \omega^{-q})$.

\begin{lemma}\label{weighted T}
Let $\O$ in $\R^n$ be a bounded domain, $\{U_t\}_{t\in\Gamma}$ a tree covering of $\Omega$ and $\omega:\Omega\to\R$ a weight which satisfies $\omega^p\in L^1(\Omega)$. If $\omega$ satisfies that 
\begin{align}\label{weight}
\esssup_{y\in W_t} \omega(y) \leq C_2 \essinf_{x\in B_t} \omega(x),
\end{align}
 for all $a\neq t\in\Gamma$, then the Hardy-type operator $T$ defined in (\ref{Ttree}) and subordinate to $\{U_t\}_{t\in\Gamma}$ is continuous from $L^q(\O,\omega^{-q})$ to itself. Moreover, its norm for $1<q<\infty$ is bounded by 
\[\|T\|_{L\to L} \leq 2\left(\frac{qN}{q-1}\right)^{1/q}C_2,\]
where $L$ denotes $L^q(\O,\omega^{-q})$, and $N$ is the overlapping constant from Definition 
\ref{Decomp of Omega}.
\end{lemma}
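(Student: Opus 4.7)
The plan is to reduce the weighted continuity to the unweighted Lemma \ref{Ttreecont} via a pointwise bound of the form $Tg(x)\leq C_2\,\omega(x)\,T(|g|\omega^{-1})(x)$. The weight hypothesis \eqref{weight} is tailor-made to produce exactly this trade on each averaging set $W_t$. As remarked just before the statement, $\omega^p\in L^1(\O)$ together with H\"older's inequality yields $L^q(\O,\omega^{-q})\subset L^1(\O)$, so $Tg$ is well-defined for $g\in L^q(\O,\omega^{-q})$; moreover, since $\O$ is bounded, $|g|\omega^{-1}\in L^q(\O)\subset L^1(\O)$, so $T$ may also be applied to $|g|\omega^{-1}$.

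For the pointwise bound, fix $t\in\Gamma\setminus\{a\}$. For almost every $x\in B_t$ and $y\in W_t$, \eqref{weight} yields $\omega(y)\leq C_2\,\omega(x)$. Inserting $\omega^{-1}(y)\omega(y)=1$ into the average gives, for a.e.\ $x\in B_t$,
\[
\frac{1}{|W_t|}\int_{W_t}|g(y)|\dy
= \frac{1}{|W_t|}\int_{W_t}|g(y)|\omega^{-1}(y)\,\omega(y)\dy
\leq C_2\,\omega(x)\cdot\frac{1}{|W_t|}\int_{W_t}|g(y)|\omega^{-1}(y)\dy.
\]
Multiplying by $\chi_t(x)$, summing over $t\neq a$, and comparing with the definition of $T$ in \eqref{Ttree}, I obtain
\[
Tg(x)\leq C_2\,\omega(x)\,T(|g|\omega^{-1})(x)\qquad\text{for a.e. }x\in\O.
\]

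Raising to the $q$-th power and integrating against $\omega^{-q}$ cancels the factor $\omega^q\omega^{-q}=1$ and produces
\[
\|Tg\|_{L^q(\O,\omega^{-q})}^q
\leq C_2^q\int_\O T(|g|\omega^{-1})(x)^q\dx
= C_2^q\,\|T(|g|\omega^{-1})\|_{L^q(\O)}^q,
\]
and a direct application of Lemma \ref{Ttreecont} to $|g|\omega^{-1}\in L^q(\O)$ bounds the right-hand side by $\bigl[2(qN/(q-1))^{1/q}C_2\bigr]^q\|g\|_{L^q(\O,\omega^{-q})}^q$, which is the stated norm estimate. The proof presents no real obstacle; its whole content is the pointwise bound, which is an immediate consequence of \eqref{weight}, and the only point requiring attention is the preliminary check that $|g|\omega^{-1}$ lies in $L^1(\O)$ so that $T$ may legitimately be applied to it, which follows from boundedness of $\O$.
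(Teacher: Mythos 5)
Your proof is correct and follows essentially the same route as the paper: you insert $\omega^{-1}(y)\omega(y)=1$ into the average over $W_t$, invoke the hypothesis \eqref{weight} to pull out $C_2\,\omega(x)$ on $B_t$, and reduce to the unweighted continuity of Lemma \ref{Ttreecont} applied to $|g|\omega^{-1}$. The only cosmetic difference is that you extract the comparison as a pointwise inequality $Tg(x)\leq C_2\,\omega(x)\,T(|g|\omega^{-1})(x)$ before integrating (which is legitimate since the cubes $B_t$ are pairwise disjoint, so at most one term of the defining sum is active at each $x$), whereas the paper carries out the same substitution directly inside the integral.
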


\begin{proof} Given $g\in L^q(\O,\omega^{-q})$ we have
\begin{eqnarray*} 
& &\int_\O|Tg(x)|^q \omega^{-q}(x)\,\d x\\
&=&\int_{\O} \omega^{-q}(x) \left|\sum_{a\neq t\in\Gamma}\frac{\chi_t(x)}{|W_t|}\int_{W_t}|g(y)|\,\d y\right|^q\,\d x\\
&=& \int_{\O} \left|\sum_{a\neq t\in\Gamma}\frac{\chi_t(x)}{|W_t|} \omega^{-1}(x)\int_{W_t}|g(y)|\omega^{-1}(y)\,\omega(y)\,\d y\right|^q\,\d x.
\end{eqnarray*}

Now, condition (\ref{weight}) implies that $\omega(y)\leq C_2 \omega(x)$ for almost every $x\in B_t$ and $y\in W_t$.  Thus, 
\begin{eqnarray*} 
& &
\int_\O|Tg(x)|^q \omega^{-q}(x)\,\d x\\
&\leq& \int_{\O} \left|\sum_{a\neq t\in\Gamma}\frac{\chi_t(x)}{|W_t|} \omega^{-1}(x)\,C_2\,\omega(x)\int_{W_t}|g(y)|\omega^{-1}(y)\,\d y\right|^q\,\d x\\
&=& C_2^q \int_{\O} \left|\sum_{a\neq t\in\Gamma}\frac{\chi_t(x)}{|W_t|}\int_{W_t}|g(y)|\omega^{-1}(y)\,\d y\right|^q\,\d x\\
&=&C_2^{q} \int_{\O} \left|T(g\omega^{-1})\right|^q\,\d x.
\end{eqnarray*}

Finally, $g\omega^{-1}$ belongs to $L^q(\O)$ and $T$ is continuous from $L^q(\O)$ to itself; we refer to  Lemma \ref{Ttreecont}, 
hence
\begin{eqnarray*} 
\int_\O|Tg(x)|^q \omega^{-q}(x)\,\d x
\leq \left(2^q \frac{qN}{q-1}\right)C_2^q\, \|g\|^q_{L^q(\O,\omega^{-q})}.
\end{eqnarray*}
\end{proof}

\section{A decomposition and Fractional Poincar\'e inequalities}
\label{Inequalities}

Let $\Omega$ in $\R^n$ be an arbitrary bounded domain and $\{U_t\}_{t\in\Gamma}$ an open covering of $\O$. The weight $\omega:\Omega\to\R_{>0}$ satisfies that $\omega^p\in L^1(\Omega)$. In addition, $u_\Omega$ denotes the average $\frac{1}{|\Omega|}\int_\Omega u(z)\dz$. For weighted spaces of functions, $u_{\Omega ,\omega}$ represents the weighted average $\frac{1}{\omega(\O)}\int_\Omega u(z)\omega(z)\dz$, where $\omega(\O):=\int_\O\omega(z)\dz$.

Now, given a bounded domain $U$ in $\R^n$ and a nonnegative measurable function
$\mu:U\times U\to \R$ we introduce the following Poincar\'e type inequality 
\begin{align}\label{local Frac}
\inf_{c\in\R}\|u-c\|_{L^p(U,\omega^p)}\leq  C \left(\int_U\int_U|u(x)-u(y)|^p\mu(x,y)\,\d y\d x\right)^{1/p},
\end{align}
where $u\in L^p(U,\omega^p)$. Notice that the right hand side in this inequality might be  infinite. The validity of (\ref{local Frac}) depends on $U$, $p$, $\mu$ and $\omega$. The function $\mu(x,y)$ might be zero, however,  $\omega(x)$ is strictly positive almost everywhere in $\Omega$.

Let us mention three examples.

\begin{example}\label{three_examples}
\begin{enumerate}
\item The weighted fractional Poincar\'e inequality  
with $\mu(x,y)=\frac{1}{|x-y|^{n+sp}}$, where 
$s\in (0,1)\,,$
is the classical fractional Poincar\'e inequality which is clearly valid for any arbitrary bounded domain. \\
\item If $\mu(x,y)=\frac{\chi_{B_x}(y)}{|x-y|^{n+sp}}$, where $B_x$ is the ball centered at $x$ with radius $\tau d(x)$ for $s,\tau\in (0,1)$, then the inequality represents a more recently studied fractional Poincar\'e inequality whose validity depends on the geometry of the domain (refer to \cite{HV} for details). \\
\item Finally, $\mu(x,y)=\frac{\rho(|x-y|)}{|x-y|^p}$, where $\rho$
 is a certain nonnegative radial function, is another inequality which has also been studied recently  (refer to \cite{P1} for details).
\end{enumerate}
\end{example}

 Inequality \eqref{local Frac} deals with an estimation of the distance to $\C$ of an arbitrary function $u$ in $L^p(\Omega,\omega^p)$. The local-to-global argument used in this paper to study this Poincar\'e type inequalities is based on the fact that $L^p(\Omega,\omega^p)$ is the dual space of $L^q(\Omega,\omega^{-q})$ and the existence of decompositions of functions in $L^q(\Omega,\omega^{-q})$ orthogonal to $\C$. Let us properly define this set and a subspace:
\begin{align}\label{space W}
\W&:=\{g\in L^q(\Omega,\o^{-q})\,:\, \int g\varphi=0 \text{ for all } \varphi\in\C \}\\ 
\label{space W0}
\W_0&:=\{g\in \W\,:\, \supp(g)\text{ intersects a finite number of } U_t \}.
\end{align}

The integrability of $\omega^p$ implies that $L^q(\Omega,\o^{-q})\subset L^1(\Omega)$, then $\W$ and $\W_0$ are well-defined.  Following Remark \ref{finite decomp}, the $\C$-decomposition of functions in $\W_0$ stated in Theorem \ref{Decomp} is finite, which is not valid in general for functions in $\W$. This property verified by the functions in $\W_0$ simplifies the proof of Lemma \ref{Decomp Frac}, which motivates the definition of this space.

Now, we introduce the spaces
\begin{align}
\W\oplus \omega^p\C&=\{g+\alpha\omega^p\,/\,g\in \W\text{ and }\alpha\in\C\}\notag\\
\label{space S}
\S:=\W_0\oplus \omega^p\C&=\{g+\alpha\omega^p\,/\,g\in \W_0\text{ and }\alpha\in\C\}.
\end{align}
It is not difficult to observe that $L^q(\Omega,\omega^{-q})=\W\oplus \omega^p\C$ and $\S$ is a subspace of $L^q(\Omega,\omega^{-q})$. The following lemma, which was proved in \cite[Lemma 3.1]{L2}, states that $\S$ is also dense  in $L^q(\Omega,\omega^{-q})$ and uses in its proof the requirement that says that for each $t\in\Gamma$ the set $U_t$ intersects a finite number of $U_s$ with $s\in\Gamma$. 

\begin{lemma}\label{weighted dense} 
The space $\S$ 
is dense in $L^q(\Omega,\omega^{-q})$. Moreover,  if
$g+\alpha\o^p$ is an element in $\S$ then \[\|g\|_{L^q(\Omega,\o^{-q})}\leq 2\|g+\alpha\o^p\|_{L^q(\Omega,\o^{-q})}.\]
\end{lemma}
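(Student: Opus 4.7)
The plan is to split the lemma into two independent parts: the norm inequality (a one-line Hölder argument) and the density statement (a truncation argument).

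\textbf{Step 1: the norm inequality.} I would apply the triangle inequality to reduce the claim to $\|\alpha\omega^p\|_{L^q(\Omega,\omega^{-q})}\le \|g+\alpha\omega^p\|_{L^q(\Omega,\omega^{-q})}$. Since $g\in\W$, testing the orthogonality relation with $\varphi\equiv 1$ gives $\int_\Omega g=0$, so
\[ \alpha\int_\Omega\omega^p=\int_\Omega(g+\alpha\omega^p)=\int_\Omega(g+\alpha\omega^p)\,\omega^{-1}\cdot\omega. \]
H\"older's inequality with the duality between $L^q(\Omega,\omega^{-q})$ and $L^p(\Omega,\omega^p)$ then yields
\[ |\alpha|\int_\Omega\omega^p\le \|g+\alpha\omega^p\|_{L^q(\Omega,\omega^{-q})}\Big(\int_\Omega\omega^p\Big)^{1/p}. \]
Since $q(p-1)=p$, a direct computation gives $\|\alpha\omega^p\|_{L^q(\Omega,\omega^{-q})}^q=|\alpha|^q\int_\Omega\omega^{pq-q}=|\alpha|^q\int_\Omega\omega^p$, which converts the previous display into the required bound.

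\textbf{Step 2: density.} I would first observe that every $f\in L^q(\Omega,\omega^{-q})$ decomposes as $g+\alpha\omega^p$ with $g\in\W$: because $L^q(\Omega,\omega^{-q})\subset L^1(\Omega)$ as noted in the excerpt, the scalar $\alpha:=\int_\Omega f\big/\int_\Omega\omega^p$ is well defined and $g:=f-\alpha\omega^p$ satisfies $\int_\Omega g=0$. Density of $\S$ therefore reduces to approximating an arbitrary $g\in\W$ by elements of $\W_0$. I would enumerate $\Gamma=\{t_1,t_2,\ldots\}$, set $\Omega_k:=\bigcup_{j\le k}U_{t_j}$, and define
\[ g_k:=g\,\chi_{\Omega_k}-\alpha_k\,\omega^p\chi_{\Omega_k},\qquad \alpha_k:=\frac{\int_{\Omega_k}g}{\int_{\Omega_k}\omega^p}. \]
By construction $\int_\Omega g_k=0$, so $g_k\in\W$; and because $\supp(g_k)\subseteq\overline{\Omega_k}$ meets only those $U_t$ that already meet some $U_{t_j}$ with $j\le k$, local finiteness of the covering (built into Definition \ref{Definition decomposition}) forces $\supp(g_k)$ to intersect only finitely many $U_t$, so actually $g_k\in\W_0$. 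The difference decomposes as $g-g_k=g\chi_{\Omega\setminus\Omega_k}+\alpha_k\omega^p\chi_{\Omega_k}$: the first summand tends to $0$ in $L^q(\Omega,\omega^{-q})$ by dominated convergence, while $\int_{\Omega_k}g\to\int_\Omega g=0$ and $\int_{\Omega_k}\omega^p\nearrow\int_\Omega\omega^p>0$ force $\alpha_k\to 0$, so the second summand also vanishes in norm by the computation of Step 1.

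\textbf{Main obstacle.} The one delicate point is the truncation in Step 2: a naive cutoff $g\chi_{\Omega_k}$ destroys the orthogonality to $\C$ and hence leaves $\W$, so the correction $-\alpha_k\omega^p\chi_{\Omega_k}$ is forced on us as the unique element of $\omega^p\C$ restoring that orthogonality. Showing that this correction is harmless in the limit is exactly where the relation $\int_\Omega g=0$ re-enters the argument, and this is the mechanism that makes $\W_0$, despite its restrictive finite-support requirement, dense in the full space $L^q(\Omega,\omega^{-q})$.
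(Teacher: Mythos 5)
Your proof is correct. The paper does not prove this lemma itself; it is imported verbatim from \cite[Lemma 3.1]{L2}, so there is no in-paper argument to compare against, but the route you take is the natural one and it is complete. In Step~1, testing the $\C$-orthogonality with $\varphi\equiv 1$ to get $\int_\Omega g=0$, then applying H\"older in the $L^q(\Omega,\omega^{-q})$--$L^p(\Omega,\omega^p)$ duality and using $q(p-1)=p$, is exactly what isolates the $\omega^p\C$ component and gives $\|\alpha\omega^p\|\le\|g+\alpha\omega^p\|$, after which the triangle inequality finishes. In Step~2, the truncation to $\Omega_k=\bigcup_{j\le k}U_{t_j}$ with the compensating term $-\alpha_k\omega^p\chi_{\Omega_k}$ is the standard device for restoring orthogonality while landing in $\W_0$; the one point worth making explicit is that $\supp(g_k)\subseteq\overline{\Omega_k}$ meets a given $U_s$ only if the open set $U_s$ already meets $\Omega_k$ (openness forces $U_s\cap\overline{\Omega_k}\neq\emptyset\Rightarrow U_s\cap\Omega_k\neq\emptyset$), after which the finite-intersection requirement in Definition~\ref{Definition decomposition} gives finiteness, and $\alpha_k\to 0$ together with dominated convergence gives the norm convergence $g_k\to g$.
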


\begin{lemma}\label{Decomp Frac} If there exists an open covering $\{U_t\}_{t\in\Gamma}$ of $\O$ such that (\ref{local Frac}) is valid on $U_t$ for all $t\in\Gamma$, with  a  uniform constant $C_1$, and there exists a  finite $\C$-orthogonal decomposition of any function $g$ in $\W_0$ subordinate to $\{U_t\}_{t\in\Gamma}$, with the estimate
\begin{equation*}
\sum_{t\in \Gamma} \|g_t\|^q_{L^q(U_t,\o^{-q})}\leq C^q_0\|g\|^q_{L^q(\Omega,\o^{-q})},
\end{equation*}
then, there exists a constant $C$ such that  
\begin{equation}\label{lemma Frac}
\|u-u_{\Omega ,\omega }\|_{L^p(\Omega,\omega^p)}\leq  C \left(\sum_{t\in\Gamma}\int_{U_t}\int_{U_t}|u(x)-u(y)|^p\mu(x,y)\,\d y\d x\right)^{1/p}
\end{equation}
is valid for any $u\in L^p(\Omega,\omega^p)$. Moreover, the constant $C=2C_0C_1$ holds in (\ref{lemma Frac}).
\end{lemma}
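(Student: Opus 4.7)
The plan is to prove \eqref{lemma Frac} by a duality argument that reduces the global bound to a weighted sum of the local inequalities on each $U_t$. Since $\omega^p\in L^1(\Omega)$, the unweighted pairing $(u,f)\mapsto\int_\Omega uf\,\dx$ puts $L^p(\Omega,\omega^p)$ and $L^q(\Omega,\omega^{-q})$ in duality via the splitting $uf=(u\omega)(f\omega^{-1})$ and H\"older's inequality. First I would write
\[\|u-u_{\Omega,\omega}\|_{L^p(\Omega,\omega^p)}=\sup\left\{\int_\Omega(u-u_{\Omega,\omega})f\,\dx\ :\ f\in L^q(\Omega,\omega^{-q}),\ \|f\|_{L^q(\Omega,\omega^{-q})}\leq 1\right\},\]
and invoke the density of $\S$ from Lemma~\ref{weighted dense} to restrict the supremum to test functions of the form $f=g+\alpha\omega^p\in\S$ with $g\in\W_0$ and $\alpha\in\C$; the same lemma also supplies the estimate $\|g\|_{L^q(\Omega,\omega^{-q})}\leq 2\|f\|_{L^q(\Omega,\omega^{-q})}\leq 2$.

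Next I would simplify the pairing and then apply the decomposition. The definition of the weighted average forces $\int_\Omega(u-u_{\Omega,\omega})\omega^p\,\dx=0$, while testing the orthogonality of $g$ with $\varphi\equiv 1$ yields $\int_\Omega g\,\dx=0$, so
\[\int_\Omega(u-u_{\Omega,\omega})f\,\dx=\int_\Omega u\,g\,\dx.\]
By Theorem~\ref{Decomp} together with Remark~\ref{finite decomp}, there is a \emph{finite} $\C$-decomposition $g=\sum_{t\in\Gamma}g_t$ subordinate to $\{U_t\}_{t\in\Gamma}$. Using $\supp(g_t)\subset U_t$ and $\int_{U_t}g_t=0$, for each $t$ I can insert the constant $c_t\in\C$ realizing the infimum in~\eqref{local Frac}, apply H\"older's inequality on $U_t$, and invoke the local hypothesis to obtain
\[\left|\int_{U_t}u\,g_t\,\dx\right|=\left|\int_{U_t}(u-c_t)g_t\,\dx\right|\leq C_1\left(\int_{U_t}\int_{U_t}|u(x)-u(y)|^p\mu(x,y)\,\dy\,\dx\right)^{1/p}\|g_t\|_{L^q(U_t,\omega^{-q})}.\]

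Finally I would sum over the finitely many active $t\in\Gamma$, apply the discrete H\"older inequality with exponents $(p,q)$, use the decomposition estimate $\sum_t\|g_t\|^q_{L^q(U_t,\omega^{-q})}\leq C_0^q\|g\|^q_{L^q(\Omega,\omega^{-q})}$, and the bound $\|g\|_{L^q(\Omega,\omega^{-q})}\leq 2$. This produces
\[\left|\int_\Omega u\,g\,\dx\right|\leq 2C_0C_1\left(\sum_{t\in\Gamma}\int_{U_t}\int_{U_t}|u(x)-u(y)|^p\mu(x,y)\,\dy\,\dx\right)^{1/p},\]
and taking the supremum over admissible $f$ gives~\eqref{lemma Frac} with constant $C=2C_0C_1$. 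The main technical point is the legitimacy of the algebraic manipulations: this is precisely why one restricts to the dense subspace $\S$, so that the decomposition supplied by Theorem~\ref{Decomp} is finite and no rearrangement or convergence issue arises. Beyond that, the argument is essentially two applications of H\"older's inequality, and I do not anticipate any substantive analytic obstacle.
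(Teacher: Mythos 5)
Your proposal is correct and follows essentially the same path as the paper's proof: duality against the dense subspace $\S$, reduction of the pairing to $\int_\Omega ug$ via the orthogonality of $g$ and the vanishing of $\int(u-u_{\Omega,\omega})\omega^p$, insertion of the local constants $c_t$ using $\int_{U_t}g_t=0$, the local inequality~\eqref{local Frac}, and two applications of H\"older. The only cosmetic difference is that the paper normalizes $u_{\Omega,\omega}=0$ at the outset, whereas you keep $u_{\Omega,\omega}$ and cancel it explicitly; both yield the constant $2C_0C_1$.
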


\begin{proof}
Without loss of generality we can assume that 
$u_{\Omega ,\omega }=0$. 
We estimate the norm on the left hand side of the inequality by duality. Thus, let $g+\omega^p\psi$ be an arbitrary function in $\S$, we refer to Lemma \ref{weighted dense}. Then,  by using the  finite $\C$-orthogonal decomposition of $g$ we conclude that
\begin{eqnarray}\label{first}
\int_\Omega u(g+\alpha\omega^{p})&=&\int_\Omega ug=\int_\Omega u\sum_{t\in\Gamma} g_t\notag\\
&=&\sum_{t\in\Gamma} \int_{U_t} u g_t=\sum_{t\in\Gamma} \int_{U_t} (u-c_t)g_t.
\end{eqnarray}
Notice that the identity in the second line is valid for any $t\in\Gamma$ and $c_t\in \R$.
 
   Next, by using the H\"older inequality in $\eqref{first}$,  the fact that (\ref{local Frac}) is valid on $U_t$ with a uniform constant $C_1$ and, finally, the H\"older inequality over the sum, we obtain 
\begin{align*}
&\int_\Omega u(g+\alpha\omega^{p})
\leq \sum_{t\in\Gamma} \inf_{c\in\R}\|u-c\|_{L^p(U_t,\omega^{p})}\|g_t\|_{L^q(U_t,\omega^{-q})}\\
&\leq C_1\sum_{t\in\Gamma} \left(\int_{U_t}\int_{U_t}|u(x)-u(y)|^p\mu(x,y)\,\d y\d x\right)^{1/p}\|g_t\|_{L^q(U_t,\omega^{-q})}\\
&\leq C_1\left(\sum_{t\in\Gamma} \int_{U_t}\int_{U_t}|u(x)-u(y)|^p\mu(x,y)\,\d y\d x\right)^{1/p}\left(\sum_{t\in\Gamma} \|g_t\|^q_{L^q(U_t,\omega^{-q})}\right)^{1/q}\\
&\leq C_0C_1\left(\sum_{t\in\Gamma} \int_{U_t}\int_{U_t}|u(x)-u(y)|^p\mu(x,y)\,\d y\d x\right)^{1/p} \|g\|_{L^q(U,\omega^{-q})}\\
&\leq 2C_0C_1\left(\sum_{t\in\Gamma} \int_{U_t}\int_{U_t}|u(x)-u(y)|^p\mu(x,y)\,\d y\d x\right)^{1/p} \|g+\alpha \omega^p\|_{L^q(U,\omega^{-q})}.
\end{align*}
Finally, as $\S$ is dense in $L^q(\Omega,\omega^{-q})$, by taking the supremum over all the functions $g+\alpha\omega^p$ in $\S$ with $\|g+\alpha\omega^p\|_{L^q(\O,\omega^{-q})}\leq 1$ we prove the result. 
\end{proof}

\section{On fractional Poincar\'e inequalities on cubes}
\label{Cubesl}
\setcounter{equation}{0}

In this section, we use the results stated in the previous two sections to show a certain fractional Poincar\'e inequality on an arbitrary cube $Q$. Thus, in order to show the existence of the $\C$-decomposition, which is used later to apply Lemma \ref{Decomp Frac}, we define a tree covering $\{U_t\}_{t\in\Gamma}$ of $Q$. This covering is only used in this section and for cubes. In the following section, we work with a different bounded domain, an arbitrary bounded John domain, which requires a different covering. However, let us warn the reader that we will keep the notation $\{U_t\}_{t\in\Gamma}$ used in Section \ref{Inequalities}.

The validity of the local inequality stated in the following proposition is well-known. We refer the reader to \cite{DD} for its proof.

\begin{proposition}\label{DD} The fractional Poincar\'e inequality 
\begin{align*}
\inf_{c\in\R} \|u(x)-c\|_{L^p(U)}\leq \left(\frac{{\rm diam}\,(U)^{n+sp}}{|U|}\int_U\int_U \frac{|u(y)-u(x)|^p}{|y-x|^{n+sp}}\,\d y\d x\right)^{1/p}
\end{align*}
holds for any bounded domain $U$ in $\R^n$ and $1\leq p<\infty$.
\end{proposition}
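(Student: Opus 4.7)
The plan is to choose the constant $c$ to be the average $u_U = \frac{1}{|U|}\int_U u(y)\,\d y$ and then reduce the left-hand side to a double integral that is controlled by the right-hand side via two elementary estimates: Jensen's inequality and the trivial bound $|x-y| \le \diam(U)$ for $x,y \in U$.

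More precisely, I would first write
\[
|u(x) - u_U|^p = \left|\frac{1}{|U|}\int_U (u(x)-u(y))\,\d y\right|^p \le \frac{1}{|U|}\int_U |u(x)-u(y)|^p\,\d y
\]
by Jensen's inequality applied to the convex function $|\cdot|^p$. Then I would multiply and divide the integrand by $|x-y|^{n+sp}$ and use that $|x-y| \le \diam(U)$ whenever $x,y \in U$, giving
\[
|u(x) - u_U|^p \le \frac{\diam(U)^{n+sp}}{|U|}\int_U \frac{|u(x)-u(y)|^p}{|x-y|^{n+sp}}\,\d y.
\]
Integrating this pointwise inequality in $x$ over $U$ and taking $p$-th roots produces the claimed estimate, since the infimum over $c \in \R$ on the left-hand side is bounded above by the choice $c = u_U$.

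There is essentially no main obstacle here: the argument is a two-line computation once one commits to the average as the test constant. The only small subtlety is making sure that the factorization $|u(x)-u(y)|^p = \frac{|u(x)-u(y)|^p}{|x-y|^{n+sp}}\cdot |x-y|^{n+sp}$ is legitimate, which it is almost everywhere (the integrand on the right is nonnegative and $|x-y|^{n+sp}$ vanishes only on the diagonal, a set of measure zero in $U\times U$). Note that the proof does not require any regularity of $\partial U$, only boundedness, which matches the generality of the statement.
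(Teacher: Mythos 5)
Your proof is correct, and it follows exactly the same route the paper takes when proving the generalized statement, Proposition \ref{DD rho}: choose $c = u_U$, apply Jensen's (H\"older's) inequality to the average, and then use $|x-y| \le \diam(U)$ to pull the factor $|x-y|^{n+sp}$ out as $\diam(U)^{n+sp}$. For Proposition \ref{DD} itself the paper simply cites \cite{DD}, but your two-step argument is precisely the one the authors write out for the $\rho$-version, and it specializes to the stated inequality with $\rho(x)=|x|^s$.
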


The following proposition is a special case of \cite[Lemma 2.2]{HV}. In the present paper, we give a different proof which let us estimate the dependance of the constant with respect to $\tau$.

\begin{proposition}\label{on cubes} Let $Q$ in $\R^n$ be a cube with side length $l(Q)=L$, $1 < p<\infty$ and $\tau\in (0,1)$. Then, the following inequality holds 
\begin{align*}
\inf_{c\in\R} \|u(x)-c\|_{L^p(Q)}\leq C_{n,p}\, \tau^{s-n} L^s\left(\int_Q\int_{Q\cap B(x,\tau L)} \frac{|u(y)-u(x)|^p}{|y-x|^{n+sp}}\,\d y\d x\right)^{1/p},
\end{align*}
where $C_{n,p}$ depends only on $n$ and $p$.
\end{proposition}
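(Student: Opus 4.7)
The plan is to apply Lemma \ref{Decomp Frac} to the cube $Q$ with the unweighted choice $\omega\equiv 1$, the kernel $\mu(x,y) = \chi_{B(x,\tau L)}(y)|x-y|^{-n-sp}$, and a tree covering of $Q$ by overlapping subcubes of diameter at most $\tau L$. Under such a covering the ball $B(x,\tau L)$ contains each entire piece of the covering and so becomes invisible in the local inequality, reducing the problem to the unrestricted fractional Poincar\'e inequality of Proposition \ref{DD} together with a quantitative control of the $\C$-decomposition furnished by Theorem \ref{Decomp}.

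First I would construct the tree covering: fix $k=\lceil 2\sqrt{n}/\tau\rceil$, partition $Q$ into $k^n$ congruent closed subcubes of side $\ell = L/k \leq \tau L/(2\sqrt{n})$, and enumerate them along a spanning path of the grid so that each cube's parent is its immediate predecessor. Let $\{U_t\}_{t\in\Gamma}$ be mild open dilations of these subcubes (so that consecutive cubes overlap) and choose pairwise disjoint intermediate cubes $B_t\subset U_t\cap U_{t_p}$ of side comparable to $\ell$. The overlap constant $N$ and the ratio $|U_t|/|B_t|$ then depend only on $n$, while $|W_t|\leq|Q|=L^n$ and $|B_t|\sim(\tau L)^n$ give the uniform estimate $|W_t|/|B_t| \leq C_n\tau^{-n}$.

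Next I would verify the two hypotheses of Lemma \ref{Decomp Frac}. Proposition \ref{DD} applied on each cube $U_t$ produces (\ref{local Frac}) with uniform constant
\[
C_1 = n^{(n+sp)/(2p)}\ell^s \leq C_{n,p}\,\tau^s L^s;
\]
since $\diam(U_t)\leq \tau L$, the containment $U_t\subset B(x,\tau L)$ for every $x\in U_t$ allows one to restrict the inner integral to $U_t\cap B(x,\tau L)$ without loss. For the decomposition, I would apply Theorem \ref{Decomp} to any $g\in\W_0$ (noting that $\W_0=\W$ here because $\Gamma$ is finite). Combining the pointwise estimates (\ref{P02})--(\ref{P01}) with the bound $|W_s|/|B_s|\leq C_n\tau^{-n}$, raising to the $q$-th power, summing over $t$ using the bounded overlap of $\{U_t\}$ and the pairwise disjointness of $\{B_t\}$, and invoking the $L^q$-continuity of $T$ from Lemma \ref{Ttreecont}, I expect
\[
\sum_{t\in\Gamma}\|g_t\|^q_{L^q(U_t)} \leq C_{n,p}\,\tau^{-nq}\,\|g\|^q_{L^q(Q)},
\]
so that $C_0 \leq C_{n,p}\,\tau^{-n}$ in the notation of Lemma \ref{Decomp Frac}.

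Putting these ingredients together, Lemma \ref{Decomp Frac} yields the desired inequality with constant $2C_0 C_1 \leq C_{n,p}\,\tau^{s-n}L^s$, while the bounded overlap of $\{U_t\}$ folds the sum on the right back into a single iterated integral over $Q$ with inner integration over $Q\cap B(x,\tau L)$, at the cost of another dimensional factor absorbed into $C_{n,p}$. The hard part will be the bookkeeping in the decomposition step: one must confirm that the $\tau^{-n}$ factor coming from $|W_s|/|B_s|$ in (\ref{P02}) is the only source of negative powers of $\tau$, and in particular that neither the norm of the Hardy operator $T$ nor the overlap multiplicity $N$ introduces any hidden $\tau$-dependence that would degrade the final exponent $s-n$.
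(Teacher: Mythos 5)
Your proposal follows essentially the same route as the paper's proof: apply Lemma~\ref{Decomp Frac} on $Q$ with $\omega\equiv 1$, build a tree covering by a regular grid of $\sim(\sqrt{n}/\tau)^n$ subcubes so that each $U_t$ has diameter at most $\tau L$, get the local constant $C_1\sim(\tau L)^s$ from Proposition~\ref{DD}, and get $C_0\sim\tau^{-n}$ from the bound $|W_s|/|B_s|\lesssim\tau^{-n}$ together with the $\tau$-free $L^q$-continuity of $T$. The paper's construction differs only in cosmetic details (it takes $U_t=(\overline{A_t}\cup\overline{A_{t_p}})^\circ$ rather than dilated subcubes, and a tree built from minimal face-sharing chains rather than a single Hamiltonian snake path), and it verifies explicitly the point you correctly flag as the crux: neither $N$ nor $\|T\|_{L^q\to L^q}$ contributes a negative power of $\tau$, so the exponent $s-n$ survives.
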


\begin{proof} This result follows from Lemma \ref{Decomp Frac} on the cube $Q$, where $\mu(x,y)=\frac{1}{|x-y|^{n+sp}}$ and $\omega\equiv 1$. So, let us start by defining an appropriate tree covering of $Q$ to obtain, via Theorem \ref{Decomp} and Remark \ref{finite decomp}, a  finite $\C$-decomposition of any functions in $\W_0$. Let $m\in \N$ be such that $\frac{\sqrt{n+3}}{\tau}<m\leq 1+\frac{\sqrt{n+3}}{\tau}$ and $\{A_t\}_{t\in\Gamma}$ the regular partition of $Q$ with $m^n$ open cubes. The side length of each cube is $l(A_t)=\frac{L}{m}$. In the example shown in Figure \ref{Fig tree},  $m=4$ and the index set  $\Gamma$ has 16 elements. 
\begin{figure}[htb]
       \includegraphics[width=50mm]{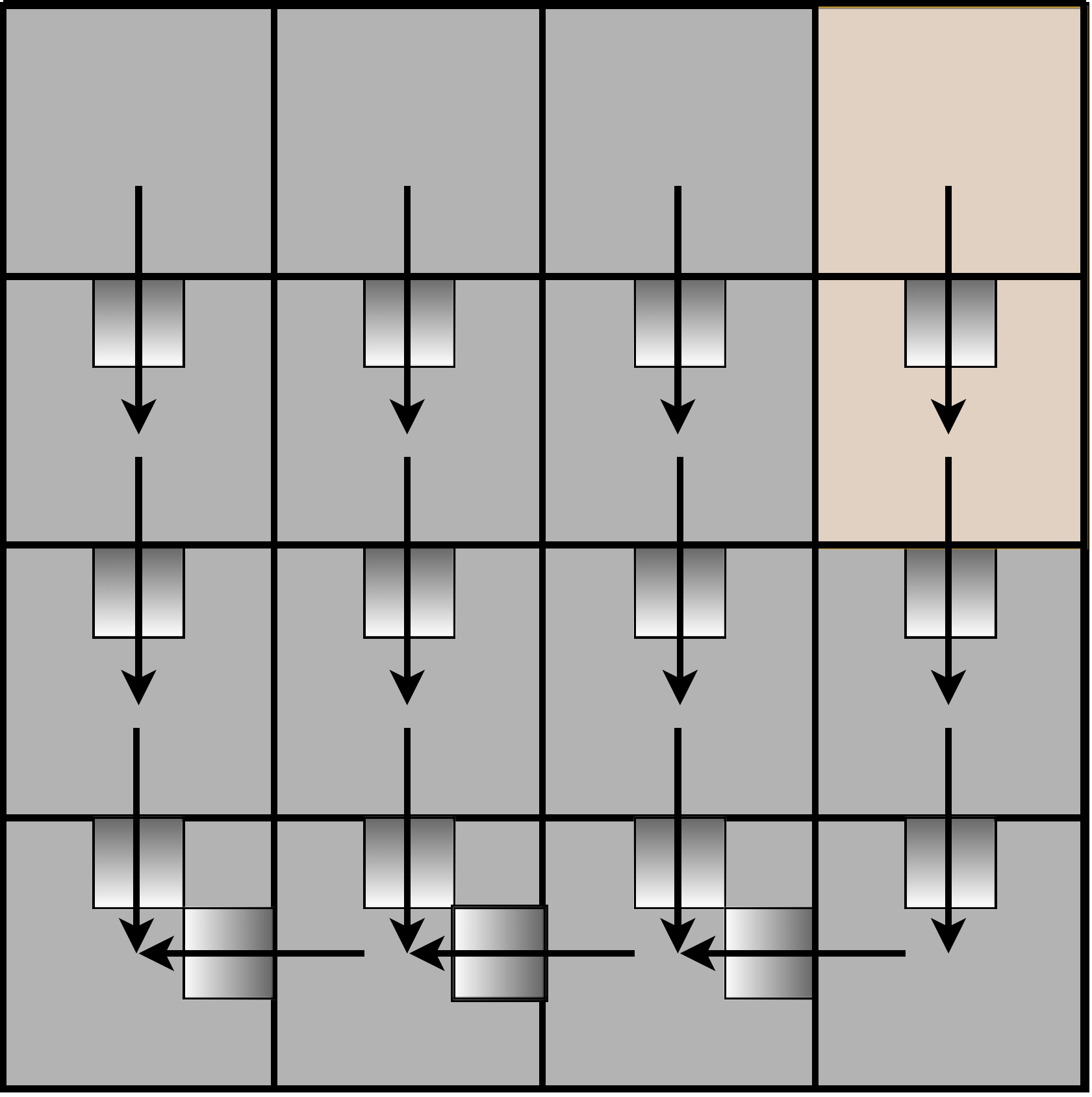}
       \caption{A tree covering of $Q$}
       \label{Fig tree}
\end{figure}

The tree covering of $Q$ that we are looking for will be defined by enlarging  the sets in the covering $\{A_t\}_{t\in\Gamma}$ in an appropriate way but keeping the tree structure of $\Gamma$, which is introduced in the following lines. Indeed, we pick a cube $A_a$, whose index will be the root, and inductively define a tree structure in $\Gamma$ such that the unique chain connecting $t$ with $a$ is associated to a chain of cubes connecting $Q_t$ with $Q_a$, with minimal number of cubes, such that two consecutive cubes share a $n-1$ dimensional face. In Figure \ref{Fig tree}, the cube $A_a$ is in the lower left corner and the tree structure is represented  using black arrows that``descend" to the root. Now that $\Gamma$ has a tree structure, we define the tree covering $\{U_t\}_{t\in\Gamma}$ of $Q$  with the rectangles $U_t:=(\overline{A_t}\cup \overline{A_{t_p}})^\circ$ if $t\neq a$ and $U_a:=A_a$. In order to have a better understanding of the construction, notice that $U_t\cap U_{t_p}=A_{t_p}$ for all $t\neq a$. Moreover, the index set $\Gamma$ in the example with its tree structure has 7 levels, from level 0 to level 6 (refer to page \pageref{level} for definitions), with only one index of level 6, whose rectangle $U_t$ appears in Figure \ref{Fig tree} in a different color. 

Now, let us define the collection $\{B_t\}_{t\neq a}$ of pairwise disjoint open cubes $B_t\subseteq U_t\cap U_{t_p}$  or equivalently $B_t\subseteq A_{t_p}$. Given $t\neq a$, we split $A_{t_p}$ into $3^n$ cubes with the same size. The open set $B_t$ is the cube in the regular partition of $A_{t_p}$ whose closure intersects the $n-1$ dimensional face $A_{t_p}$ in the intersection $(\overline{A_t}\cap \overline{A_{t_p}})$. There are  $3^{n-1}$ cubes with that property but we pick  $B_t$ to be the  one  which does not share any part of any other $n-1$ dimensional face of $\overline{A_{t_p}}$.

The cubes in $\{B_t\}_{t\neq a}$ have side length equal to $\frac{L}{3m}$ and are represented in Figure \ref{Fig tree} by the 15 grey gradient small cubes.
By its construction, it is easy to check that $\{B_t\}_{t\neq a}$ is a collection of pairwise disjoint open cubes $B_t\subseteq U_t\cap U_{t_p}$, hence, $\{U_t\}_{t\in\Gamma}$ is a tree covering of $Q$ with $N=2n$ (it could also be  less).  

By Theorem \ref{Decomp}, there is a finite $\C$-decomposition of functions $\{g_t\}_{t\in\Gamma}$ subordinate to $\{U_t\}_{t\in\Gamma}$ which satisfies (\ref{P02}) and $(\ref{P01})$. Moreover, it can be seen that  
\[\frac{|W_s|}{|B_s|}\leq \frac{|Q|}{|B_s|}= (3m)^n,\]
for all $s\in\Gamma$, thus,
\begin{equation*}
|g_t(x)|\leq |g(x)|+(3m)^nTg(x),
\end{equation*}
for all $t\in\Gamma$ and $x\in U_t$. Next, using the continuity of $T$ stated in Lemma \ref{Ttreecont} and some straightforward calculations we conclude 
\begin{align*}
\sum_{t\in \Gamma} \|g_t\|^q_{L^q(U_t)}&\leq 2^{q-1}N\left(1+(3m)^{nq}2^q\frac{qN}{q-1}\right)\|g\|^q_{L^q(Q)}\\
&\leq \frac{2^{2q+2}n^2q}{q-1}\,(3m)^{nq}\,\|g\|^q_{L^q(Q)}\\
&\leq \frac{2^{2q+2}3^{nq}n^2q}{q-1}\,\left(1+\sqrt{n+3}\right)^{nq}\,\tau^{-nq}\,\|g\|^q_{L^q(Q)}.
\end{align*}
Hence, we have  a finite $\C$-decomposition of any function in $\W_0$ subordinate to $\{U_t\}_{t\in\Gamma}$ with the constant in the estimate equal to  
\begin{align*}
C_0&=\left(\frac{2^{2q+2}{3^{nq}}n^2q}{q-1}\right)^{1/q}\,\left(1+\sqrt{n+3}\right)^n\,\tau^{-n}.
\end{align*}
Now, from Proposition \ref{DD} and using  that $m>\frac{\sqrt{n+3}}{\tau}$, we can conclude that inequality (\ref{local Frac}) is valid on each $U_t$ with an uniform constant 
\begin{align*}
C_1&=(n+3)^{n/2p}(\tau L)^{s}.
\end{align*}
Thus, using Lemma \ref{Decomp Frac} we can claim that 
\begin{align*}
\|u-u_{Q}\|_{L^p(Q)}\leq 2C_0C_1 \left(\sum_{t\in\Gamma}\int_{U_t}\int_{U_t}\frac{|u(x)-u(y)|^p}{|x-y|^{n+sp}}\,\d y\d x\right)^{1/p}.
\end{align*}
Finally, $\text{diam}(U_t)\leq \sqrt{n+3}\frac{L}{m}\leq \tau L$, thus $U_t\subset B(x,\tau L)$ for  any $x\in U_t$, thus, using the control on the overlapping of the tree covering given by $N=2n$, it follows that
\begin{align*}
\|u-u_{Q}\|_{L^p(Q)}\leq C_{n,p}\, \tau^{-n} (\tau L)^s\, \left(\int_{Q}\int_{Q\cap B(x,\tau L)}\frac{|u(x)-u(y)|^p}{|x-y|^{n+sp}}\,\d y\d x\right)^{1/p},
\end{align*}
where 
\begin{align}\label{C on cubes}
C_{n,p}=2\left(\frac{2^{2q+2}3^{nq}n^2q}{q-1}\right)^{1/q}\,\left(1+\sqrt{n+3}\right)^n(n+3)^{n/2p}(2n)^{1/p}.
\end{align}
\end{proof}

\section{On fractional Poincar\'e inequalities on John domains}
\label{John}
\setcounter{equation}{0}

In this section, we apply the results obtained in the previous sections on an arbitrary bounded John domain $\Omega$. Its definition is recalled below. The weight $\omega(x)$ is defined as $d_F(x)^\beta$, where $d_F(x)$ denotes the distance from $x$ to an arbitrary compact set $F$ in $\partial\O$ and $\beta\geq 0$. In the particular case where $F=\partial\Omega$, $d_{\partial\Omega}(x)$ is simply denoted as $d(x)$. Notice that $\omega^p$ belongs to $L^1(\Omega)$ for being $\Omega$ bounded and $\beta$ nonnegative.

A Whitney decomposition of $\O$ is a collection $\{Q_t\}_{t\in\Gamma}$ of closed pairwise disjoint dyadic cubes, which verifies
\begin{enumerate}
\item $\O=\bigcup_{t\in\Gamma}Q_t$.
\item $\text{diam}(Q_t) \leq \text{dist}(Q_t,\partial\Omega) \leq 4\text{diam}(Q_t)$.
\item $\frac{1}{4}\text{diam}(Q_s)\leq \text{diam}(Q_t)\leq 4\text{diam}(Q_s)$, if $Q_s\cap Q_t\neq \emptyset$.
\end{enumerate}
Here,  $\dist (Q_t,\partial \Omega )$ is the Euclidean distance between $Q_t$ and the boundary of $\Omega$, denoted by $\partial \Omega$.
The diameter of the cube $Q_t$ is denoted by $\diam (Q_t)$ and  the side length is written as $\ell (Q_t)$.

Two different cubes $Q_s$ and $Q_t$ with $Q_s\cap Q_t\neq \emptyset$ are called {\it neighbors}. This kind of covering exists for any proper open set in $\R^n$ (refer to  \cite[VI 1]{S} for details). Moreover, each cube $Q_t$ has less than or equal to $12^n$ neighbors. And, if we fix $0<\epsilon<\frac{1}{4}$ and define $(1+\epsilon )Q_t$ as the cube with the same center as $Q_t$ and side length $(1+\epsilon)$ times the side length of $Q_t$, then $(1+\epsilon )Q_t$ touches $(1+\epsilon )Q_s$ if and only if  $Q_t$ and $Q_s$ are neighbors. 

Given a Whitney decomposition $\{Q_t\}_{t\in\Gamma}$ of $\O$ we refer by an expanded Whitney decomposition of $\O$ to the collection of open cubes $\{Q^*_t\}_{t\in\Gamma}$ defined by 
\begin{align*}
Q^*_t:=\frac98 Q_t^\circ.
\end{align*}
Observe that this collection of cubes satisfies that \[\chi_\O(x) \leq 12^n \sum_{t\in\Gamma}\chi_{Q^*_t}(x)\leq (12^n)^2  \chi_\O(x)\]
for all $x\in\R^n.$

We recall the definition of a {\it bounded John domain}. 
A bounded domain $\Omega$ in $\R^n$ is a John domain with constants $a$ and $b$, $0<a\le b <\infty$, if
there is a point $x_0$ in $\Omega$ such that for each point $x$ in $\Omega$ there exists a rectifiable curve
$\gamma_x$ in $\Omega$, parametrized by its arc length written as  $\length (\gamma_x )$, such that
\[
\dist (\gamma_x (t),\partial \Omega )\geq \frac{a}{\length (\gamma_x)}t\quad \mbox{ for all } t\in [0,\length (\gamma_x)]
\]
and
\[
\length (\gamma_x)\le b.
\]
Examples of John domains are convex domains, uniform domains, and also domains with slits, for example
$B^2(0,1)\backslash [0,1)$.
The John property fails in domains with zero angle outward spikes. 
John domains were introduced by Fritz John in \cite{J}  and they were renamed by O. Martio and J. Sarvas as John domains later.

There are other equivalent definitions of John domains. In these notes, we are interested in a definition of the style of Boman chain condition (see \cite{BKL}) in terms of Whitney decompositions and trees. This equivalent definition 
is  introduced in \cite{L2}.

\begin{defi}\label{tcovJohn} A bounded domain $\O$ in $\R^n$ is a John domain if for any Whitney decomposition $\{Q_t\}_{t\in\Gamma}$, there exists a constant $K>1$ and a tree structure of $\Gamma$, with  a root $a$, 
that satisfies 
\begin{align}\label{Boman tree}
Q_s\subseteq KQ_t,
\end{align}
for any $s,t\in\Gamma$ with $s\succeq t$. In other words, the shadow of 
$Q_t$  written as  $W_t$ is contained in $KQ_t$; refer to \eqref{W_t}. Moreover, the intersection of the cubes associated to adjacent indices, $Q_t$ and $Q_{t_p}$, is an $n-1$ dimensional face of one of these  cubes.
\end{defi}

Now, given a Whitney decomposition $\{Q_t\}_{t\in\Gamma}$ of a bounded John domain $\Omega$ in $\R^n$, with constant $K$ in the sense of (\ref{Boman tree}), we define the tree covering 
 $\{U_t\}_{t\in\Gamma}$
of expanded Whitney cubes
such that
\begin{align}\label{cover John}
U_t:= Q_t^*.
\end{align}
The overlapping is bounded by $N=12^n$. 
Now, each open cube $B_t$ in the collection $\{B_t\}_{t\neq a}$ shares the center with the $n-1$ dimensional face $Q_t\cap Q_{t_p}$ and has side length $\frac{l_t}{64}$, where $l_t$ is the side length of $Q_t$. It follows from the third condition in the Whitney decomposition, and some calculations, that this collection is pairwise disjoint and \[B_t\subset Q_t^*\cap Q_{t_p}^*=U_t\cap U_{t_p}.\]
Moreover, it can be seen that 
\begin{align}\label{Eccentricity}
\frac{|W_t|}{|B_t|}\leq \frac{(K\frac{9}{8}l_t)^n}{(\frac{l_t}{64})^n}=72^n\,K^n,
\end{align}
for all $t\in\Gamma$, with $t\neq a$.

\begin{lemma}\label{dist F} Let $\Omega$ in $\R^n$ be a John domain with the constant $K$ in the sense of (\ref{Boman tree}), $F$ in $\partial\Omega$ a compact set and $d_F(x)$ the distance from $x$ to $F$. Then, 
\begin{align*}
\sup_{y\in W_t} d_F(y) \leq 3K\sqrt{n} \inf_{x\in B_t} d_F(x),
\end{align*}
for all $t\in \Gamma$.
\end{lemma}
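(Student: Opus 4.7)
The plan is to reduce the lemma to the pointwise bound $d_F(y) \leq 3K\sqrt{n}\, d_F(x)$ for arbitrary $y \in W_t$ and $x \in B_t$; taking the supremum in $y$ and the infimum in $x$ then yields the statement. The triangle inequality immediately gives $d_F(y) \leq |y - x| + d_F(x)$, so the whole task reduces to controlling $|y-x|$ by $d_F(x)$.

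The first key observation is that both $x$ and $y$ lie inside the same dilated cube $KQ_t$. The John tree condition \eqref{Boman tree} gives $y \in W_t \subseteq K Q_t$. On the other hand, since $Q_t^* = U_t \subseteq W_t$, the same inclusion $W_t \subseteq K Q_t$ forces $Q_t^* \subseteq K Q_t$, and therefore $B_t \subseteq Q_t^* \subseteq K Q_t$ (observe, in passing, that this automatically forces $K \geq 9/8$). Consequently $|y - x| \leq \diam(K Q_t) = K l_t \sqrt{n}$.

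To bound $d_F(x)$ from below I will combine $F \subseteq \partial \Omega$, which gives $d_F(x) \geq d(x)$, with the Whitney properties. For $x \in Q_t^*$ there is a nearest point $x' \in Q_t$ with $|x - x'| \leq (l_t/16)\sqrt{n}$, and property~(2) of the Whitney decomposition yields $d(x') \geq \dist(Q_t, \partial \Omega) \geq \diam(Q_t) = l_t \sqrt{n}$. Subtracting, $d(x) \geq (15/16)\, l_t \sqrt{n}$, which rearranges to $l_t \sqrt{n} \leq (16/15)\, d_F(x)$. Inserting this into the bound on $|y-x|$ and returning to the triangle inequality gives
\[
 d_F(y) \leq \tfrac{16}{15} K\, d_F(x) + d_F(x) \leq 3 K\, d_F(x) \leq 3K \sqrt{n}\, d_F(x),
\]
where the final two steps use $K > 1$ and $n \geq 2$.

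I do not anticipate a serious obstacle: the only nontrivial step is recognising the chain $B_t \subseteq Q_t^* \subseteq W_t \subseteq K Q_t$, which places $x$ and $y$ in a common cube of controlled diameter. Once that is noted, everything else is routine bookkeeping with the Whitney definition and the triangle inequality, and the constants come out cleanly with room to spare inside the stated $3K\sqrt{n}$.
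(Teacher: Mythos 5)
Your proof is correct and follows essentially the same route as the paper's: triangle inequality to reduce to bounding $|y-x|$, then the shadow inclusion from the John/tree structure to control $|y-x|$ by a multiple of $l_t$, and finally the Whitney property~(2) to bound $l_t$ by $d(x)\le d_F(x)$. The only differences are cosmetic: the paper estimates $\dist(Q_t^*,\partial\Omega)$ via the crude inclusion $3Q_t\subset\Omega$, while you estimate $d(x)$ directly via a nearest point $x'\in Q_t$ and pick up an extra factor $\sqrt n$ in the lower bound on $d(x)$; as a result your intermediate constant is $3K$ rather than $3K\sqrt n$, both of which are within the stated bound. (The paper also uses $W_t\subseteq KU_t$ in the proof where you use the inclusion $W_t\subseteq KQ_t$ stated in Definition~\ref{tcovJohn}; either version leaves ample slack.)
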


A similar inequality is also valid if we consider the weight $d_F^{\beta}(x)$ with a nonnegative power of the distance to $F$. Thus, this lemma implies, via Lemma \ref{weighted T}, the continuity of the operator $T$ from $L^q(\Omega,d_F^{-q\beta})$ to itself  with an estimation of its constant. 
Then, there exists a $\C$-decomposition with a weighted estimate for a certain weight. 

\begin{proof} Given $t\in\Gamma$, with $t\neq a$, $x\in B_t$ and $y\in W_t:=\cup_{s\succeq t}U_s$, we have to prove that $d_F(y)\leq 3K d_F(x)$. Notice that $d(x)\leq d_F(x)$ for all $x\in\Omega$.  Moreover, $Q_s\subseteq K Q_t$ for all $s\succeq t$, then $W_t\subseteq K U_t$.  In addition, 
\begin{align*}
d_F(y)&\leq |y-x| + d_F(x)\leq \text{diam}(W_t) + d_F(x)\\
& \leq K\text{diam}(U_t) + d_F(x)\\ 
& = K\frac{_9}{^8}\text{diam}(Q_t)+d_F(x).\\
\end{align*}
Finally, using the second property stated in the Whitney decomposition it follows that $3Q_t\subset \Omega$. Then, as  
\[{\rm dist}(Q^*_t,\partial\O)\geq {\rm dist}(Q^*_t,(3Q_t)^c)\geq \frac{_{15}}{^{16}}l_t,\] 
doing some calculations we can assert that 
\[\text{diam}(Q_t)\leq \frac{_{16}}{^{15}}\sqrt{n}\,\text{dist}(Q_t^*,\partial\Omega)\leq \frac{_{16}}{^9}\sqrt{n}\,\text{dist}(Q_t^*,\partial\Omega).\] 
Thus, 
\begin{align*}
d_F(y)& \leq 2K\sqrt{n}\,\text{dist}(Q^*_t,\partial \Omega)+d_F(x)\\
&\leq 2K\sqrt{n}\,d(x)+d_F(x)\leq 2K\sqrt{n}\, d_F(x) + d_F(x).
\end{align*}
\end{proof}

Now we are able to prove Theorem \ref{Main} and also to give  the dependence of the constant $C$ on the given value of $\tau$ and the constant $K$ from \eqref{Boman tree}.

\begin{proof}[Proof of Theorem \ref{Main}] This result follows from Lemma \ref{Decomp Frac} with the tree covering $\{U_t\}_{t\in\Gamma}$ of $\Omega$ defined in (\ref{cover John}), $\omega(x):=d^\beta_F(x)$ and 
\begin{align}\label{mu}
\mu(x,y):=\frac{d^{ps}(x)\, d_F^{p\beta}(x)\, \chi_{B(x,\tau d(x))}(y)}{|x-y|^{n+sp}}.
\end{align}
Notice that $\omega^{p}$ belongs to $L^1(\Omega)$, the condition assumed at the beginning of Section \ref{Inequalities}. The validity of (\ref{local Frac}) on  a cube $U_t$, with  a uniform constant $C_1$, follows from Proposition \ref{on cubes}. Indeed, by using 
 the fact
that $U_t$ is 
an expanded Whitney cube by a factor $9/8$  and $F\subseteq\partial \Omega$, it follows that  \[\sup_{x\in U_t}d^\beta_F(x)\leq 2^\beta \inf_{x\in U_t}d^\beta_F(x).\]
Thus, we have 
 \begin{align*}
\inf_{c\in\R} \|u(x)-c\|&_{L^p(U_t,d_F^{p\beta})}\\
\leq &C_{n,p}\, \tau^{s-n} L_t^s\, 2^\beta\left(\int_{U_t}\int_{U_t} \frac{|u(x)-u(y)|^p}{|x-y|^{n+sp}} d_F^{p\beta}(x)\chi_{B(x,\tau L_t)}(y)\,\d y\d x\right)^{1/p},
\end{align*} 
 where $L_t$ is the side length of $U_t$ and $C_{n,p}$ is the constant in (\ref{C on cubes}). Now, observe that  $L_t\leq d(x)$ for all $x\in U_t$. Indeed, if $x\in Q_t$ then 
 \[L_t=\frac{_9}{^8}l_t<\sqrt{n}\,l_t=\diam(Q_t)\leq \dist(Q_t,\partial\Omega)\leq d(x),\]
 where $l_t$ is the side length of $Q_t$. Now, if $x \in U_t\setminus Q_t$ then 
\[\sqrt{n}\,l_t\leq \dist(Q_t,\partial\Omega)\leq \dist(U_t,\partial\Omega) + \frac{_1}{^{16}} \sqrt{n}\, l_t,\]
hence, $\frac{15}{16}\sqrt{n}\,l_t\leq \dist(U_t,\partial\Omega)$ and 
\[L_t=\frac{_9}{^8}l_t<\frac{_{15}}{^{16}}\sqrt{n}\,l_t\leq\dist(U_t,\partial\Omega)\leq d(x).\]

Then, the validity of $L_t\leq d(x)$ for all $x\in U_t$ implies (\ref{local Frac}) for all $U_t$, where $\mu(x,y)$ is the 
function
defined in (\ref{mu}), and the uniform constant 
 \begin{equation}\label{C1}
 C_1=C_{n,p} \tau^{s-n} 2^\beta.
 \end{equation}
 
Next, by Theorem \ref{Decomp}, there is a  finite  $\C$-decomposition of functions $\{g_t\}_{t\in\Gamma}$  subordinate to $\{U_t\}_{t\in\Gamma}$ of any function $g$ in $\W_0$  which satisfies (\ref{P02}) and $(\ref{P01})$. Moreover, using (\ref{Eccentricity}), it can be seen that  
\begin{equation*}
|g_t(x)|\leq |g(x)|+(72K)^nTg(x),
\end{equation*}
for all $t\in\Gamma$ and $x\in U_t$. 

Now, $\omega(x):=d_F^\beta(x)$ fulfills the hypothesis of Lemma \ref{weighted T} where the constant in (\ref{weight}) is $C_2=(3K \sqrt{n})^\beta$. The last assertion uses Lemma \ref{dist F}. Thus, the operator $T$ is continuous from $L:=L^q(\Omega,d_F^{-q\beta})$ to itself with   the norm 
\[\|T\|_{L\to L} \leq 2\left(\frac{qN}{q-1}\right)^{1/q}(3K\sqrt{n})^\beta.\]
Hence, 
\begin{align*}
&\sum_{t\in \Gamma} \|g_t\|^q_{L^q(U_t,d_F^{-q\beta})}\\
&\leq 2^{q-1} \left\{\left(\sum_{t\in\Gamma} \int_{U_t}|g(x)|^q\, d_F^{-q\beta}(x)\right)+(72K)^{qn}\left(\sum_{t\in\Gamma} \int_{U_t}|Tg(x)|^q\, d_F^{-q\beta}(x)\right)\right\}\\
&\leq 2^{q-1}N \left\{\int_\Omega |g(x)|^q\, d_F^{-q\beta}(x)\,\dx+(72K)^{qn}\int_\Omega |Tg(x)|^q\, d_F^{-q\beta}(x)\,\dx\right\}\\
&\leq 2^{q-1}N \left\{1+(72K)^{qn}2^q\left(\frac{qN}{q-1}\right)(3K \sqrt{n})^{q\beta}\right\} \|g\|^q_{L^q(\Omega,d_F^{-q\beta})}\\
&\leq 4^q N^2 (72K)^{qn}\left(\frac{q}{q-1}\right)(3K \sqrt{n})^{q\beta} \|g\|^q_{L^q(\Omega,d_F^{-q\beta})}\\
&= 4^q\,12^{2n}\,72^{qn}\,(3 \sqrt{n})^{q\beta}\left(\frac{q}{q-1}\right) K^{q(n+\beta)} \|g\|^q_{L^q(\Omega,d_F^{-q\beta})}.
\end{align*}
Therefore, we have a $\C$-decomposition subordinate to $\{U_t\}_{t\in\Gamma}$ with constant   
 \begin{equation}\label{C0}
C_0=4 (12)^{2n/q} (72)^n (3 \sqrt{n})^{\beta}\left(\frac{q}{q-1}\right)^{1/q} K^{n+\beta}.
\end{equation}
Finally, inequality (\ref{lemma Frac}) and the control  on the overlapping of the tree covering by $N=12^n$ implies (\ref{Fractional}). 
\end{proof}

 \begin{remark}\label{constant}
Notice that the proof of Theorem \ref{Main} provides an explicit constant $C=2C_0C_1$ for inequality \eqref{Fractional}, where $C_0$ and $C_1$ are described respectively in \eqref{C0} and \eqref{C1}. 
\end{remark}

The next result, similar to Proposition \ref{DD}, follows from the H\"older inequality (equivalently, from Minkowski's integral inequality).

\begin{proposition}\label{DD rho} Let $\rho:\R^n\setminus\{{\bf 0}\}\to\R$ be a positive radial Lebesgue measurable function which is increasing with respect to the radius. Then, 
the fractional Poincar\'e type inequality 
\begin{align}\label{C on DD rho}
\|u(x)-u_U\|_{L^p(U)}\leq \frac{{\rm diam}\,(U)^{n/p} \, \rho({\rm diam}(U))}{|U|^{1/p}}\left( \int_U\int_U \frac{|u(y)-u(x)|^p}{|y-x|^n(\rho|y-x|)^p}\,\d y\d x\right)^{1/p}
\end{align}
holds for any bounded domain $U$ in $\R^n$ and $1 < p<\infty$, where $u_U:=\frac{1}{|U|}\int_U u(y)\dy$.
\end{proposition}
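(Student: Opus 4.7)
The plan is to derive the inequality by applying a weighted Hölder inequality to the standard identity
\[
u(x)-u_U \;=\; \frac{1}{|U|}\int_U \bigl(u(x)-u(y)\bigr)\,\dy,
\]
choosing the splitting so that the heavy weight $|y-x|^{-n}\rho(|y-x|)^{-p}$ appears in one factor and the monotone, bounded quantity $\rho(|y-x|)^q$ in the other. Specifically, for fixed $x\in U$, I would write
\[
|u(x)-u(y)| \;=\; \frac{|u(x)-u(y)|}{|y-x|^{n/p}\,\rho(|y-x|)}\,\cdot\, |y-x|^{n/p}\rho(|y-x|)
\]
and apply Hölder's inequality with exponents $p$ and $q=p/(p-1)$.

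Next, I would use the two hypotheses on $\rho$ to control the elementary factor. Since $\rho$ is positive, radial, and increasing in the radius, and since $|y-x|\leq \diam(U)$ for all $x,y\in U$, we get
\[
\int_U |y-x|^{nq/p}\rho(|y-x|)^q\,\dy \;\leq\; \diam(U)^{nq/p}\,\rho(\diam(U))^q\,|U|.
\]
Taking the $q$-th root yields $\diam(U)^{n/p}\rho(\diam(U))\,|U|^{1/q}$, which is exactly the flavor of constant that appears on the right-hand side of \eqref{C on DD rho} after we account for the prefactor $1/|U|$ (noting that $1-1/q=1/p$).

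Finally, raising to the $p$-th power and integrating in $x$ over $U$ turns the pointwise bound
\[
|u(x)-u_U|^p \;\leq\; \frac{\diam(U)^{n}\rho(\diam(U))^p}{|U|}\int_U \frac{|u(x)-u(y)|^p}{|y-x|^n\rho(|y-x|)^p}\,\dy
\]
into the claimed inequality after taking a $p$-th root. I do not expect any serious obstacle: the only real choice is in the Hölder splitting, and the monotonicity of $\rho$ combined with the crude bound $|y-x|\leq\diam(U)$ handles the elementary factor. The Minkowski integral inequality formulation referenced in the statement would proceed similarly, by first writing $u-u_U$ as an average of the differences $u(x)-u(y)$ and then applying the integral Minkowski inequality in $L^p(U,\dx)$ to the family of functions indexed by $y$.
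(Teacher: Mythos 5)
Your argument is correct and arrives at exactly the constant in \eqref{C on DD rho}. It is essentially the same approach as the paper's: the paper first applies Jensen's inequality to pull the $p$-th power inside the average $\frac{1}{|U|}\int_U$, then multiplies and divides by the weight $|x-y|^n\rho(|x-y|)^p$ and bounds it above by $\diam(U)^n\rho(\diam(U))^p$, whereas you fold the weight into the H\"older splitting directly; the two manipulations are algebraically equivalent and give the identical constant.
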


\begin{proof} 
\begin{align*}
\int_{U}|u(x)-u_U|^p  \dx &= \int_{U}\left|\frac{1}{|U|}\int_U u(x)-u(y) \dy\right|^p \dx
\leq \frac{1}{|U|}\int_U\int_U |u(x)-u(y)|^p \d y\d x\\
&\leq \dfrac{\diam(U)^n\{\rho(\diam(U))\}^p}{|U|}\int_U\int_U \dfrac{|u(x)-u(y)|^p}{|x-y|^n(\rho|x-y|)^p} \d y\d x.\\
\end{align*}
\end{proof}

\begin{remark} If $\rho(x)=|x|^s$, with $s\in (0,1)$, we recover the classical fractional Poincar\'e inequality.  
\end{remark}

We generalize the fractional Poincar\'e inequality stated in Theorem \ref{Main} by replacing the fractional derivatives given by the power functions  $|x|^s$, with $0<s<1$, by general increasing and positive radial functions $\rho|x|$.

\begin{theorem}\label{Main 2} Let $\O$ in $\R^n$ be a bounded John domain and $1< p<\infty$. Given an arbitrary compact set $F$ in $\partial\Omega$, a parameter $\beta\geq 0$ and a positive radial Lebesgue measurable function $\rho:\R^n\setminus\{{\bf 0}\}\to\R$ increasing with respect to the radius, there exists a constant $C$ such that 
\begin{multline}\label{Fractional 2}
\left(\int_{\Omega}|u(x)-{ u_{\Omega ,\omega }}|^p d_F^{p\beta}(x)\dx\right)^{1/p}\\
\leq C \left(\int_\Omega \int_{\O\cap B(x,d(x))}\frac{|u(x)-u(y)|^p}{|x-y|^n(\rho|x-y|)^p}[\rho(2d(x))]^p d_F^{p\beta}(x)\,\dy\dx\right)^{1/p}
\end{multline}
for all function $u\in L^p(\O,d(x)^{p\beta})$. We denote by $d(x)$ and $d_F(x)$ the distance from $x$ to $\partial\O$ and $F$ respectively, and by $u_{\Omega ,\omega}$ the weighted average $\frac{1}{d_F^{p\beta}(\O)}\int_\O u(z) d_F^{p\beta}(z)\dz$.

In addition, the constant $C$ in (\ref{Fractional 2}) can be written as 
\begin{align*}
C= C_{n, p, \beta}\, K^{n+\beta},
\end{align*} 
where $K$ is the geometric constant introduced in (\ref{Boman tree}).
\end{theorem}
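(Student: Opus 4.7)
The plan is to run the same local-to-global argument that proved Theorem \ref{Main}, but to replace the role of Proposition \ref{on cubes} (the power-function local Poincar\'e on cubes) by its $\rho$-counterpart, which can be extracted from Proposition \ref{DD rho} after a finer sub-partition. Concretely, I would apply Lemma \ref{Decomp Frac} with the weight $\omega(x):=d_F^\beta(x)$, the expanded Whitney tree covering $\{U_t\}_{t\in\Gamma}=\{\tfrac{9}{8}Q_t^\circ\}$ of $\Omega$ defined in \eqref{cover John}, and the kernel
\[
\mu(x,y):=\frac{[\rho(2d(x))]^p\,d_F^{p\beta}(x)\,\chi_{B(x,d(x))}(y)}{|x-y|^n(\rho|x-y|)^p}.
\]
Once the two hypotheses of Lemma \ref{Decomp Frac} are in place---a uniform local Poincar\'e on each $U_t$ with constant $C_1$ and a weighted $\C$-orthogonal decomposition of functions in $\W_0$ with constant $C_0$---the conclusion \eqref{Fractional 2} follows with $C=2C_0C_1$.

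The local inequality on $U_t$ is the new step. I would obtain it from an auxiliary ball-restricted Poincar\'e on the cube $U_t$,
\[
\inf_{c\in\R}\|u-c\|_{L^p(U_t)}\leq C_{n,p}\,\rho(L_t)\left(\int_{U_t}\int_{U_t\cap B(x,L_t)}\frac{|u(x)-u(y)|^p}{|x-y|^n(\rho|x-y|)^p}\,\d y\,\d x\right)^{1/p},
\]
which is proved by the very method of Proposition \ref{on cubes}: partition $U_t$ into $m^n$ congruent sub-cubes with $m>\sqrt{n+3}$ so that the union of two adjacent sub-cubes has diameter strictly less than $L_t$, apply Proposition \ref{DD rho} on each such union (the prefactor is controlled by $\rho(L_t)$ via monotonicity of $\rho$), and combine via Lemma \ref{Decomp Frac} on this inner scale. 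Since $L_t\leq d(x)$ for every $x\in U_t$---as established in the proof of Theorem \ref{Main}---the ball $B(x,L_t)$ can be enlarged to $B(x,d(x))$ in the right-hand side, and the scalar prefactor $\rho(L_t)$ can be replaced by the pointwise factor $\rho(2d(x))$ by monotonicity of $\rho$. The weighted version with norm $\|\cdot\|_{L^p(U_t,d_F^{p\beta})}$ is then obtained by the exact same oscillation trick $\sup_{U_t}d_F^{p\beta}\leq 2^{p\beta}\inf_{U_t}d_F^{p\beta}$ used in Theorem \ref{Main}, giving the local inequality with a uniform constant $C_1=C_{n,p,\beta}$.

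The weighted $\C$-orthogonal decomposition is produced exactly as at the end of the proof of Theorem \ref{Main}. Theorem \ref{Decomp} furnishes a finite decomposition $\{g_t\}$ satisfying the pointwise bounds \eqref{P02}--\eqref{P01}; the eccentricity estimate \eqref{Eccentricity} gives $|W_t|/|B_t|\leq 72^nK^n$; and Lemma \ref{dist F} verifies the hypothesis of Lemma \ref{weighted T} for the weight $d_F^\beta$ with constant $C_2=(3K\sqrt{n})^\beta$, making the Hardy-type operator $T$ continuous on $L^q(\Omega,d_F^{-q\beta})$. The identical computation then yields $\bigl(\sum_t\|g_t\|^q_{L^q(U_t,d_F^{-q\beta})}\bigr)^{1/q}\leq C_0\|g\|_{L^q(\Omega,d_F^{-q\beta})}$ with $C_0=C_{n,p,\beta}\,K^{n+\beta}$.

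Combining these two estimates via Lemma \ref{Decomp Frac}, and using the bounded overlap $N=12^n$ of the tree covering to pass from $\sum_t\int_{U_t}\int_{U_t\cap B(x,d(x))}$ to the single double integral over $\Omega\times(\Omega\cap B(x,d(x)))$, finishes the argument and gives $C=C_{n,p,\beta}\,K^{n+\beta}$ as claimed. No $\tau$-factor appears because the inner sub-partition is fixed once and for all depending on the dimension, so the analogue of the $\tau^{s-n}$ constant of Proposition \ref{on cubes} is purely dimensional. The main technical obstacle I expect is precisely the ball-restricted local Poincar\'e on $U_t$: although Proposition \ref{DD rho} provides the unrestricted version immediately, enforcing the restriction $\chi_{B(x,L_t)}(y)$ needed to inherit the global restriction $\chi_{B(x,d(x))}(y)$ in \eqref{Fractional 2} requires the extra inner Boman-type partition of $U_t$ together with careful bookkeeping of dimensional constants through a second application of Lemma \ref{Decomp Frac}.
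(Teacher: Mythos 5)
Your proposal reuses the paper's local-to-global machinery exactly: Lemma \ref{Decomp Frac} with the expanded Whitney tree covering \eqref{cover John}, the weight $\omega=d_F^{\beta}$, the eccentricity bound \eqref{Eccentricity}, and Lemmas \ref{dist F} and \ref{weighted T} for the continuity of $T$; this all matches and gives $C_0=C_{n,p,\beta}K^{n+\beta}$ as in the paper. The divergence is in the local step. The paper's actual proof is a two-line remark: it takes the kernel $\mu(x,y)=[\rho(2d(x))]^{p}d_F^{p\beta}(x)/\bigl(|x-y|^{n}(\rho|x-y|)^{p}\bigr)$ \emph{without} any ball restriction, and verifies \eqref{local Frac} on each $U_t$ by applying Proposition \ref{DD rho} directly, moving the prefactor $\rho(\diam(U_t))$ inside the integral as $\rho(2d(x))$ via monotonicity and the bound $\diam(U_t)\leq 2d(x)$. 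There is no inner sub-partition of $U_t$ and no second pass of Lemma \ref{Decomp Frac}. You instead build $\chi_{B(x,d(x))}(y)$ into $\mu$ and manufacture a ball-restricted local Poincar\'e on $U_t$ through an extra Boman-type sub-division \`a la Proposition \ref{on cubes}; the step you flag as the ``main technical obstacle'' is genuinely additional to what the paper does, not a reconstruction of it. That said, your heavier route is not wasted: the paper's proof, read literally, only places $y\in B(x,2d(x))$ for $x,y\in U_t$ (one checks $\diam(U_t)\leq\tfrac{6}{5}d(x)$, which can exceed $d(x)$), so it yields \eqref{Fractional 2} with radius $2d(x)$ rather than the stated $d(x)$. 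Your extra inner partition is exactly what recovers the smaller radius, at the cost of slightly larger dimensional constants absorbed into $C_{n,p,\beta}$.
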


\begin{proof} This proof mimics the one of Theorem \ref{Main} with Proposition \ref{DD rho} instead of Proposition \ref{on cubes}. Indeed, we will use again the tree covering $\{U_t\}_{t\in\Gamma}$ of $\Omega$ defined in (\ref{cover John}) and the weight $\omega(x)=d_F^\beta(x)$, however, in this case $\mu(x,y)$ is defined as 
\begin{align*}
\mu(x,y):=\frac{[\rho(2d(x))]^p d_F^{p\beta}(x)}{|x-y|^n(\rho|x-y|)^p}.
\end{align*}
We only have to show that (\ref{local Frac}) is verified on $U_t$, for all $t$, with uniform constant. This fact follows from (\ref{C on DD rho}) by using  the inequality $\diam(U_t)\leq 2 d(x)$ for all $x\in U_t$.
\end{proof} 

 \section*{Acknowledgements}
This research was initiated when the second author visited the University of Helsinki during the Summer of 2016. This author gratefully acknowledges Professor Michel Lapidus from University of California Riverside for his financial support for the expenses of the trip.

\end{document}